\theoremstyle{plain}
\theoremstyle{definition}
\newtheorem{theorem}{Theorem}[section]
\newtheorem{thm}[theorem]{Theorem}
\newtheorem{lem}[theorem]{Lemma}
\newtheorem{corollary}[theorem]{Corollary}
\newtheorem{defn}{Definition}[section]
\begin{document}
	

	%
	\title{Elementary Lift and Single Element Coextension of A Binary Gammoid}\maketitle
	
	\markboth{ Shital Dilip Solanki, Ganesh Mundhe and S. B. Dhotre}{ Elementary Lift and Single Element Coextension of A Binary Gammoid}\begin{center}\begin{large} Shital Dilip Solanki$^1$, Ganesh Mundhe$^2$ and S. B. Dhotre$^3$ \end{large}\\\begin{small}\vskip.1in\emph{
				1. Ajeenkya DY Patil University, Pune-411047, Maharashtra,
				India\\ 
				2. Army Institute of Technology, Pune-411015, Maharashtra,
				India\\
				3. Department of Mathematics,
				Savitribai Phule Pune University,\\ Pune - 411007, Maharashtra,
				India}\\
			E-mail: \texttt{1. shital.solanki@adypu.edu.in, 2. gmundhe@aitpune.edu.in, 3. dsantosh2@yahoo.co.in. }\end{small}\end{center}\vskip.2in
	\begin{abstract} Elementary lift is a generalized splitting operation. Splitting of a binary gammoid need not be a gammoid. This paper finds forbidden minors for a binary gammoid whose elementary lift is a binary gammoid. We also find forbidden minors for a binary gammoid whose single element coextension is a binary gammoid.	\end{abstract}\vskip.2in
	\noindent\begin{Small}\textbf{Mathematics Subject Classification (2010)}:
		05B35,05C83, 05C50    \\\textbf{Keywords}:Binary Matroid, Splitting, Binary Gammoid, Minor, Quotient, Coextension, Lift. \end{Small}\vskip.2in
	\vskip.25in

	\baselineskip 19truept 
	\section{Introduction}
\noindent
Initially, the splitting operation in graphs is introduced by Fleischner \cite{fl} as follows.\\
Let $a=(v_1,v)$ and $b=(v_2,v)$ be two arcs incident at node $v$. Then remove arcs $a$, $b$ and adding new node $v'$ and arcs $a=(v_1,v')$ and $b=(v_2,v')$. This operation is called splitting using two arcs. This operation is extended to binary matroid by Raghunathan et al. \cite{ttr} as a splitting of binary matroid using two element set. Later, this operation is generalized as a splitting of matroid using n-element set as follows, by Shikare et al. \cite{mms1}.  

\begin{defn}\cite{mms1}
	Let a binary matroid $B$ with $H \subseteq E(B)$ and let $A$ be the matrix representing the binary matroid $B$. Obtain a new matrix $A_H$ by adding a new row at the bottom of the matrix $A$ with entries 0 everywhere except 1 at the columns representing elements of $H$. Then the vector matroid of $A_H$ is called splitting matroid $B_H$, and the operation is called splitting using a set.
\end{defn}  
\noindent Later, for a binary matroid, Azadi \cite{azt} defined the element splitting operation as follows.
\begin{defn}\cite{azt} 
	Let a binary matroid is $B$ and $H \subseteq E(B)$. Let the matrix $A$ represent binary matroid $B$. Obtain a matrix $A_H'$ by adding a row at the bottom of the matrix $A$ with entries $0$ everywhere except entries $1$ in the columns corresponding to the elements in $H$ and adding one column with entries zero except in the last row where the entry is $1$. Then the vector matroid corresponding to the matrix $A_H'$ is the element splitting matroid $B_H'$, and the operation is known as the element splitting operation.
\end{defn}
\noindent Later, for binary matroid, Azanchilar \cite{azn} defined the es-splitting operation as follows.
\begin{defn}
Let the matrix $A$ represents a binary matroid $B$ and $H \subseteq E(B)$ with $e \in H$. Let $N$ be a matrix obtained from $A$ by adding a row labeled $\gamma$, similar to a row labeled by $e$. Let $M$ be the vector matroid of the matrix $N$, then the element splitting matroid $M_H'$ of $M$ is called es-splitting matroid of $B$, and $ B_H^e$ denotes it, and the operation is called es-splitting operation.  
\end{defn}
\noindent For undefined concepts, we refer to Oxley \cite{ox}.\\
Let $M$ and $Q$ be matroids and let matroid $N$ be such that for some subset $X$ of $E(N)$, $M=N\backslash X$ and $Q=N/X$. Then $Q$ is called the quotient of $M$, and $M$ is called the lift of $Q$. If $X$ is a singleton set, then $Q$ is called the elementary quotient of $M$ and $M$ is called the elementary lift of $Q$. From the definition of splitting and coextension, it is observed that the elementary lift of $Q$ is the splitting matroid $Q_T$ for some $T$. From the definition of element splitting operation, it is clear that the single element coextension of the given matroid $B$ is the element splitting matroid $B_T'$ for some $T\subseteq E(B)$.

\noindent It was observed that the elementary lift or splitting operation and single element coextension or element splitting operation of binary matroid do not preserve the properties like graphicness, cographicness, contentedness etc. Borse \cite{ymb1} characterized graphic matroid, which gives graphic matroid under splitting operation using two-element set. Similarly, graphic matroid is characterized by Mundhe \cite{gm} whose elementary lift is graphic.   \\
Thus, in the same way, it is very interesting to characterize the class of  binary gammoid whose elementary lift and single element coextension is gammoid.  

\noindent Characterization of a binary gammoid is given as follows.
\begin{thm}\label{gammoid}\cite{ox}
	For a matroid $M$ the following are equivalent.\\
	(i) $M$ is a binary gammoid.\\
	(ii)$M$ is a graphic gammoid. \\
	(iii) $M$ is a regular gammoid. \\
	(iv) $M$ has no minor isomorphic to $U_{2,4}$ or $M(K_4)$.
\end{thm}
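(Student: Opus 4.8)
The plan is to prove the cycle of implications $(ii)\Rightarrow(iii)\Rightarrow(i)\Rightarrow(iv)\Rightarrow(ii)$. The first two require nothing beyond standard containments of matroid classes: a graphic matroid is regular and a regular matroid is binary, and since each of conditions (i)--(iii) carries the additional assertion that $M$ is a gammoid, both $(ii)\Rightarrow(iii)$ and $(iii)\Rightarrow(i)$ are immediate. The substance of the theorem lies in $(i)\Rightarrow(iv)$ and $(iv)\Rightarrow(ii)$.

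For $(i)\Rightarrow(iv)$: being binary, $M$ is representable over $GF(2)$ and hence has no $U_{2,4}$-minor, by Tutte's excluded-minor characterisation of binary matroids. Being a gammoid, $M$ has no minor isomorphic to any matroid that is not a gammoid, since the class of gammoids is closed under taking minors (Ingleton--Piff); thus it suffices to know that $M(K_4)$ is \emph{not} a gammoid. I would record this as the key lemma. The natural proof is via strong base-orderability: every gammoid is strongly base-orderable, whereas $M(K_4)$ is not --- taking the spanning trees $B_1=\{12,23,34\}$ and $B_2=\{13,14,24\}$ of $K_4$, one verifies that for no bijection $\pi\colon B_1\to B_2$ is $(B_1\setminus A)\cup\pi(A)$ a basis for every $A\subseteq B_1$. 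Consequently $M$ has no $M(K_4)$-minor, which is (iv).

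For $(iv)\Rightarrow(ii)$ I would first show $M$ is graphic and then that it is a gammoid. For graphicness, recall Tutte's excluded minors for graphic matroids: $U_{2,4}$, $F_7$, $F_7^{*}$, $M^{*}(K_5)$, $M^{*}(K_{3,3})$. The first is forbidden by (iv); for the rest, $F_7$ has an $M(K_4)$-minor (indeed $F_7\setminus e\cong M(K_4)$), hence dually so does $F_7^{*}$, and since $K_4$ is a minor of each of $K_5$ and $K_{3,3}$ while $M(K_4)\cong M^{*}(K_4)$ ($K_4$ being planar and self-dual), $M(K_4)$ is a minor of $M^{*}(K_5)$ and of $M^{*}(K_{3,3})$. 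Hence a matroid with no $U_{2,4}$- and no $M(K_4)$-minor has none of the five excluded minors for graphic matroids, so $M=M(G)$ for some graph $G$.

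Finally, $M(G)$ has no $M(K_4)$-minor, so $G$ has no $K_4$-minor, i.e.\ $G$ is a series--parallel network; thus $M(G)$ is obtained from loops and coloops by a sequence of direct sums, series extensions and parallel extensions. Loops and coloops are gammoids (as are all uniform matroids, being transversal); the class of gammoids is closed under direct sum and under parallel extension, and, being closed under duality (Ingleton--Piff), also under series extension, a series extension being dual to a parallel extension. Hence $M(G)$ is a graphic gammoid, completing $(iv)\Rightarrow(ii)$. The principal obstacle is the key lemma that $M(K_4)$ is not a gammoid --- the sole point in the argument that is not a formal deduction from known excluded-minor lists and closure properties --- together with assembling the exact closure properties of the class of gammoids used in the last step; everything else is bookkeeping.
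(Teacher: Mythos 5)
This theorem is quoted in the paper from Oxley's book with no proof supplied, so there is nothing internal to compare against; your argument is a correct reconstruction of the standard one. The cycle $(ii)\Rightarrow(iii)\Rightarrow(i)$ is indeed free from the containments graphic $\subseteq$ regular $\subseteq$ binary, and your two substantive implications are sound: for $(i)\Rightarrow(iv)$ you correctly reduce to showing $M(K_4)$ is not a gammoid, and the route through strong base-orderability works (in fact $M(K_4)$ already fails ordinary base-orderability for the two disjoint spanning trees you name, which one can check by the short case analysis on where $12$ and $34$ can be sent); for $(iv)\Rightarrow(ii)$ the reduction of Tutte's five excluded minors for graphicness to $U_{2,4}$ and $M(K_4)$ is right ($F_7\backslash e\cong M(K_4)$, self-duality of $M(K_4)$, and $K_4$ a minor of $K_5$ and $K_{3,3}$), and the final step --- no $K_4$-minor forces $G$ to be series--parallel, and gammoids are closed under direct sum, parallel extension, duality (Ingleton--Piff) and hence series extension --- is exactly how Oxley derives the result. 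The only points you lean on without proof are themselves named theorems (Tutte's excluded-minor lists, strong base-orderability of gammoids, closure of gammoids under minors and duality), which is appropriate at this level of detail; if you wanted the argument self-contained, the one genuinely non-formal ingredient to supply is the proof that gammoids are strongly base-orderable, since everything else is bookkeeping as you say.
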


\noindent{\bf Notation:} Let $\mathcal{G}_k$ denote the class of a binary gammoid which do not yield a binary gammoid after splitting using the k-element set. 

Borse \cite{ymb_Gammoid} characterized a binary gammoid whose splitting and element splitting is a binary gammoid as follows.

\begin{thm}\cite{ymb_Gammoid}\label{G2}
	Let  a binary gammoid be $M$, then $M \in \mathcal{G}_2$ if and only if $M$ contain $M(G_1)$ minor, where $G_1$ is shown in Figure \ref{G2fig}.
\end{thm}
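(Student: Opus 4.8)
The plan is to prove the characterization by showing both directions via the excluded-minor description of binary gammoids from Theorem \ref{gammoid}, namely that a binary matroid is a binary gammoid iff it has no $U_{2,4}$ or $M(K_4)$ minor. Since $U_{2,4}$ is not binary, for a binary matroid this reduces to the absence of an $M(K_4)$-minor. So the statement $M \in \mathcal{G}_2$ (i.e.\ some splitting $M_T$ with $|T|=2$ fails to be a binary gammoid) becomes: some $M_T$ has an $M(K_4)$-minor, and we must show this happens precisely when $M$ itself contains $M(G_1)$ as a minor.

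For the \emph{sufficiency} direction (if $M$ has an $M(G_1)$-minor then $M \in \mathcal{G}_2$), I would first check by a direct computation on the graph $G_1$ that $M(G_1)$ itself has a splitting $M(G_1)_T$ for a suitable $2$-element set $T$ that contains $M(K_4)$ as a minor; $G_1$ is presumably a small graph designed exactly so that this is transparent. Then I would use the fact (standard, and presumably used elsewhere in this line of work) that the splitting operation behaves well with respect to minors: if $N$ is a minor of $M$ obtained by deleting/contracting elements outside $T$, then $N_T$ is a minor of $M_T$. So an $M(G_1)$-minor of $M$ lying on a set containing $T$ propagates an $M(K_4)$-minor up into $M_T$, giving $M \in \mathcal{G}_2$. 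The only care needed is the bookkeeping about which element gets contracted/deleted and ensuring $T$ survives in the minor; this is routine but must be done cleanly.

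For the \emph{necessity} direction (if $M \in \mathcal{G}_2$ then $M$ has an $M(G_1)$-minor) — which I expect to be the main obstacle — the approach is: suppose $M$ is a binary gammoid and $M_T$ is not a binary gammoid for some $2$-element set $T=\{e,f\}$. Then $M_T$ has an $M(K_4)$-minor, say $M_T \backslash D / C \cong M(K_4)$. One analyzes how the new splitting row/element interacts with this minor. The standard technique is to split into cases according to whether the extra coordinate introduced by the splitting is deleted, contracted, or retained in passing to the $M(K_4)$-minor, and in each case pull the structure back to a constrained minor of the original $M$. Because $M$ is itself a gammoid (no $M(K_4)$-minor), the $M(K_4)$ in $M_T$ must genuinely use the splitting row, which forces $M$ to contain a specific small configuration on the elements of $T$ together with a few others — and that configuration is exactly $M(G_1)$. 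Making the case analysis exhaustive and identifying the resulting forbidden configuration up to isomorphism with $M(G_1)$ is the delicate part; it will likely rely on the explicit small size of $M(K_4)$ (six elements, rank three) to bound the number of relevant elements and reduce to finitely many cases.

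Throughout, I would lean on Theorem \ref{gammoid} to convert every ``is/is not a binary gammoid'' assertion into an $M(K_4)$-minor assertion, and on Theorem \ref{G2}'s analogue for graphic matroids (the Borse and Mundhe results cited) as a template for the case analysis, since the element-splitting and splitting operations there were handled by the same machinery. The final write-up would state the $M(G_1)$ configuration explicitly (via its graph or a binary representing matrix), verify the sufficiency computation once, and then carry out the necessity case analysis, with the isomorphism identification of the extremal configuration as the concluding step.
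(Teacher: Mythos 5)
Your sufficiency direction matches the paper: a direct verification that a suitable two-element splitting of $M(G_1)$ yields an $M(K_4)$-minor, propagated upward by the compatibility of splitting with minors taken outside $T$. Your framing of the whole problem via Theorem \ref{gammoid} (reducing every gammoid assertion to an $M(K_4)$-minor assertion) is also exactly what the paper does.

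The necessity direction, however, is where your proposal has a genuine gap: you correctly identify that one must analyze how the splitting row participates in the $M(K_4)$-minor of $M_T$ and ``pull back'' to a constrained minor $P$ of $M$, but you defer the decisive step --- showing that this constrained configuration must be $M(G_1)$ --- as ``the delicate part,'' and the bound you invoke (``the small size of $M(K_4)$ reduces to finitely many cases'') is not by itself a finiteness argument, since infinitely many binary gammoids admit a two-element splitting with an $M(K_4)$-minor. The missing idea is the paper's quotient machinery: after isolating a minor $P$ with $P_H\cong M(K_4)$ or $P_H/H'\cong M(K_4)$ (Lemma \ref{minlem}), one observes via the element-splitting matroid $P_H'$ that $P\cong Q/a$, or a bounded coextension thereof, for some $Q$ with $Q\backslash a\cong M(K_4)$ (Lemma \ref{rel_min_quo}); this converts the problem into enumerating the \emph{graphic quotients of $M(K_4)$}, which are exactly the four graphs $Q_1,\dots,Q_4$ of Lemma \ref{qk4}, and then checking those graphs and their one- or two-element coextensions for a $G_1$-minor (discarding $Q_1$ outright since it is $M(K_4)$ itself). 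Without some such structural reduction your case analysis has no a priori finite list of configurations to examine, so the argument as proposed cannot be completed as written.
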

\begin{figure}[h!]
	\centering
	\unitlength 1mm 
	\linethickness{0.4pt}
	\ifx\plotpoint\undefined\newsavebox{\plotpoint}\fi 
	\begin{picture}(29.56,28)(0,0)
		\put(28.81,11.25){\circle*{1.5}}
		\put(9.31,11.281){\circle*{1.5}}
		\put(9.158,11.299){\line(1,0){19.534}}
		\put(9.473,27){\circle*{1.5}}
		\put(28.79,27.25){\circle*{1.5}}
		\put(9.358,27.102){\line(1,0){19.325}}
		\put(28.683,27.102){\line(0,-1){15.608}}
		\put(9.21,11.196){\line(0,1){16.352}}
		\multiput(28.75,27.5)(-.0409751037,-.0337136929){482}{\line(-1,0){.0409751037}}
		\qbezier(9,11.5)(2.5,18.25)(9,27)
		\qbezier(9,11.5)(18.125,2)(28.75,11.5)
		\put(3,19){\makebox(0,0)[cc]{$x$}}
		\put(17.25,4.5){\makebox(0,0)[cc]{$y$}}
		\put(16.75,0){\makebox(0,0)[cc]{$G_1$}}
	\end{picture}
	\caption{Minimal minor of the class $\mathcal{G}_2$.}
	\label{G2fig}
\end{figure}
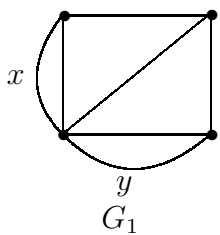

\begin{thm}\cite{ymb_Gammoid}
	Let $M$ be a binary gammoid, then element splitting matroid $M_H'$ is a binary gammoid, for any $H \subseteq E(M)$ with $|H|=2$ if and only if $M$ does not contain $M(G_6)$ minor, where $G_6$ is as shown in Figure \ref{elementsp1}.
\end{thm}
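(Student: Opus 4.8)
The plan is to follow the same template that gives Theorem~\ref{G2}: characterize the obstruction by means of the excluded-minor criterion for binary gammoids (Theorem~\ref{gammoid}(iv)), namely the presence of a $U_{2,4}$ or $M(K_4)$ minor. So I want to show that, for a binary gammoid $M$, the element splitting matroid $M_H'$ (with $|H|=2$) fails to be a binary gammoid precisely when $M$ has an $M(G_6)$ minor. Since $M$ is itself a binary gammoid, $M_H'$ is automatically binary; the only way it can fail to be a gammoid is to contain $U_{2,4}$ or $M(K_4)$ as a minor. The natural two-part structure is: (Sufficiency) if $M$ has an $M(G_6)$ minor then $M_H'$ has a bad minor for a suitable choice of $H$ contained in that minor; (Necessity) if some $M_H'$ has a $U_{2,4}$ or $M(K_4)$ minor, then $M$ must contain $M(G_6)$.

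\textbf{Sufficiency.} First I would check directly on the small graph $G_6$ that for the distinguished pair $H$ (the two edges that will be labelled in Figure~\ref{elementsp1}), the element splitting matroid $M(G_6)_H'$ contains $U_{2,4}$ or $M(K_4)$ as a minor — this is a finite computation on an explicit small matrix, deleting/contracting the few extra elements and exhibiting the forbidden configuration. Then I would invoke the standard fact that element splitting commutes appropriately with minors: if $N$ is a minor of $M$ obtained by deleting $D$ and contracting $C$ with $H\subseteq E(N)$, then $N_H'$ is a minor of $M_H'$ (one deletes $D$, and for each $e\in C$ either contracts $e$ or, if $e$ lay in the support of the added row, performs the contraction after a row operation). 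Hence an $M(G_6)$ minor of $M$ lifts to an $M(G_6)_H'$ minor of $M_H'$, which carries the forbidden minor.

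\textbf{Necessity.} This is the main obstacle, as usual in these excluded-minor arguments. Suppose $M_H'$ has a $U_{2,4}$ or $M(K_4)$ minor but $M$ is a binary gammoid, so $M$ has no $U_{2,4}$ or $M(K_4)$ minor. The added column/row of the element splitting must be essentially involved in the bad minor, otherwise the bad minor would already sit inside $M$ (up to deleting/contracting the new element $a_0$), contradicting that $M$ is a gammoid. So I would analyze, case by case, the local structure of $M$ around $H$ that is forced by producing a $U_{2,4}$ or an $M(K_4)$ after element splitting: writing $M_H' = N$ with new element $z$, a bad minor using $z$ says that $M_H'\backslash X / Y$ is $U_{2,4}$ or $M(K_4)$, and translating the deletions/contractions of the old elements back through the definition of element splitting pins down a restricted substructure of $M$ on the relevant elements (together with $H$). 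The claim will be that in every case this forced substructure, viewed in $M$, contains $M(G_6)$ as a minor — equivalently, $M(G_6)$ is the unique minor-minimal binary gammoid with a pair $H$ whose element splitting is not a gammoid. Realizing this cleanly is easiest by a connectivity reduction: reduce to the case where $M$ is $3$-connected (handling $2$-separations by noting that element splitting interacts with $2$-sums in a controlled way and that $G_6$ itself is $3$-connected or has a small block structure matching Figure~\ref{elementsp1}), and then run the minor analysis on the $3$-connected pieces, where the possibilities are sharply limited.

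\textbf{Remark on the proof organization.} In practice I would prove the necessity direction contrapositively and constructively: assume $M$ has no $M(G_6)$ minor, and show by induction on $|E(M)|$ (peeling off elements not in $H$ and using that element splitting commutes with minors both ways when the deleted/contracted element is outside $H$) that $M_H'$ has no $U_{2,4}$ and no $M(K_4)$ minor. The base cases are the $3$-connected binary gammoids with a distinguished pair $H$ and no $M(G_6)$ minor, for which one checks $M_H'$ is a gammoid directly, and the inductive step is the connectivity/minor bookkeeping. The delicate point throughout is keeping track of how $H$ behaves under contraction of an element of $H$ (which collapses the pair) versus contraction of an element outside $H$, and making sure the "bad" minor cannot be produced by a chain of reductions that avoids $G_6$ — this is exactly where the explicit forbidden graph $G_6$ from Figure~\ref{elementsp1} is needed, and verifying its minimality is the crux.
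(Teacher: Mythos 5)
Your sufficiency direction is essentially right and matches what is needed: a finite check that $M(G_6)_H'$ has an $M(K_4)$ (or $U_{2,4}$) minor for the distinguished pair $H$, combined with the fact that element splitting commutes with deletion and contraction of elements outside $H$, so the bad minor lifts to $M_H'$.

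The necessity direction, however, is only a plan, and the plan misses the one observation that makes this theorem short. (The paper itself quotes this statement from Borse without reproving it, but it proves the identical statement for $|H|\geq 3$ as Theorem \ref{mtelsp}, and that argument transfers verbatim to $|H|=2$.) The engine is the pair of identities $M_H'\backslash a\cong M_H$ and $M_H'/a\cong M$, where $a$ is the new element introduced by element splitting. Suppose $M_H'\backslash H_1/H_2\cong M(K_4)$ (a $U_{2,4}$ minor is impossible since $M_H'$ is binary, as you correctly note). There are exactly three cases according to the location of $a$. If $a\in H_2$, the $M(K_4)$ minor already sits inside $M_H'/a\cong M$, contradicting that $M$ is a gammoid. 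If $a\in H_1$, it sits inside $M_H'\backslash a\cong M_H$, so the previously established splitting characterization (Theorem \ref{G2}) forces an $M(G_1)$ minor of $M$, which in turn contains $M(G_6)$. If $a\notin H_1\cup H_2$, then $a$ is an element of the $M(K_4)$ minor, and contracting it shows that $M\cong M_H'/a$ has an $M(K_4)/e\cong M(G_6)$ minor. In particular, $G_6$ is not something to be discovered by an exhaustive minimality search: it is literally $K_4$ with one edge contracted, and it falls out of the third case in one line.

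By contrast, your proposed route for necessity --- reduce to $3$-connected pieces, induct on $|E(M)|$, and then assert that ``in every case this forced substructure contains $M(G_6)$'' --- leaves the entire content of the theorem unestablished: the case analysis is never exhibited, the claim that $G_6$ is the unique minimal obstruction is exactly what has to be proved and is not, and the interaction of element splitting with $2$-sums on which your reduction depends is asserted rather than verified. You also do not record or use the identities $M_H'\backslash a\cong M_H$ and $M_H'/a\cong M$, nor the already-proved splitting theorem, which are precisely what collapse the ``main obstacle'' you identify into a three-case check. As written, the necessity half is a statement of intent rather than a proof.
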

\begin{figure}[h!]
	\unitlength 1mm 
	\linethickness{0.4pt}
	\ifx\plotpoint\undefined\newsavebox{\plotpoint}\fi 
	\begin{picture}(28.264,24.188)(0,0)
		\put(26.81,8.438){\circle*{1.5}}
		\put(16.81,23.438){\circle*{1.5}}
		\put(7.31,8.469){\circle*{1.5}}
		\put(7.158,8.487){\line(1,0){19.534}}
		\multiput(16.595,23.588)(.0336731392,-.0485889968){309}{\line(0,-1){.0485889968}}
		\multiput(16.743,23.291)(-.0336655052,-.0512787456){287}{\line(0,-1){.0512787456}}
		\qbezier(16.892,23.44)(28.264,21.135)(26.852,8.426)
		\qbezier(7.23,8.574)(5,20.764)(16.446,23.44)
		\put(17.25,3.25){\makebox(0,0)[cc]{$G_6$}}
	\end{picture}
	\caption{Forbidden minor for element splitting of a binary gammoid.}
	\label{elementsp1}
\end{figure}
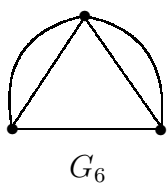

\noindent 
This paper aims to characterize a binary gammoid whose splitting and element splitting is a binary gammoid. We prove the main theorems as stated below.
\begin{thm}\label{mt1}
	Let a binary gammoid be $M$ and $k \geq 2$ such that $M \in \mathcal{G}_k$. Then $M$ contain a minor $P$, for which one of the following holds.\\
	(i) $P$ is isomorphic to the one element extension of some minimal minor of the class $\mathcal{G}_{k-1}$.\\
	(ii) $P = M(Q_i)$ or extension of $M(Q_i)$ not more than k elements, where the graph $Q_i$ is shown in Figure \ref{qk4fig}, for $i=2,3,4$.
\end{thm}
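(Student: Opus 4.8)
The plan is to prove Theorem \ref{mt1} by induction on $k$, using the fact (observed in the introduction) that an elementary lift of $M$ is precisely a splitting matroid $M_T$ for some $T \subseteq E(M)$, together with the characterization of binary gammoids in Theorem \ref{gammoid}: $M_T$ fails to be a binary gammoid exactly when it has a $U_{2,4}$- or $M(K_4)$-minor. So $M \in \mathcal{G}_k$ means there is a $k$-element set $T$ with $M_T$ having such a bad minor, and we must trace this bad minor back down to a small minor of $M$ itself. First I would set up the base case $k=2$, where Theorem \ref{G2} already identifies $M(G_1)$ as the unique minimal minor; I would check that $G_1$ indeed arises as one of the graphs $Q_i$ (or an extension thereof) so that the statement of Theorem \ref{mt1} is consistent at $k=2$, or note that the $k=2$ case is simply quoted.

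For the inductive step, suppose $M \in \mathcal{G}_k$ with $k \ge 3$, witnessed by a set $T$ with $|T| = k$ such that $M_T$ has a $U_{2,4}$- or $M(K_4)$-minor $N$. Pick an element $e \in T$ and consider $M \setminus e$ and $M / e$. The key structural observation I would exploit is how splitting interacts with deletion and contraction of an element of the split set: roughly, $(M_T)$ restricted appropriately relates to $(M \setminus e)_{T \setminus e}$ and $(M/e)_{T \setminus e}$, so that if $M \setminus e$ or $M/e$ still lies in $\mathcal{G}_{k-1}$ we can apply the induction hypothesis to get a minor $P'$ of that smaller matroid of the prescribed type, and then $P'$ or a one-element extension of $P'$ lives inside $M$ — giving case (i). The remaining, genuinely new situation is when neither $M \setminus e$ nor $M / e$ lies in $\mathcal{G}_{k-1}$ for \emph{any} choice of $e \in T$; this is the minimality/boundary case, and here one must argue directly that $M$ (or a bounded-size extension of it) contains one of the explicit graphic matroids $M(Q_i)$. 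The idea is that the bad minor $N$ in $M_T$ uses the new "splitting row," and failure to descend forces $N$ to interact with all $k$ coordinates of $T$ in a rigid way, which pins down the cycle/cocycle structure of $M$ on a small set — and a finite case analysis (using that $U_{2,4}$ and $M(K_4)$ are themselves small) identifies the possibilities as extensions of $Q_2, Q_3, Q_4$ by at most $k$ elements.

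I expect the main obstacle to be exactly this last case: showing that when the induction cannot be applied via any single-element deletion or contraction, the obstruction is forced to be one of the finitely many graphs $Q_i$ (up to extension by at most $k$ elements). This requires a careful analysis of how the appended splitting coordinate creates a $U_{2,4}$ or $M(K_4)$ in $M_T$, and arguing that the "extra" elements beyond a core minor can be stripped off to reach a $Q_i$ — controlling the number of such extra elements by $k$. A secondary technical point will be making precise the behavior of the splitting operation under minors of elements of $T$ versus elements outside $T$ (the two behave differently), since the clean inductive bookkeeping in case (i) depends on getting these commutation relations right; I would isolate this as a preliminary lemma before the main induction.
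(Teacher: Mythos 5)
Your overall plan---split into a ``descent to $\mathcal{G}_{k-1}$'' case and a ``core obstruction'' case---does parallel the paper's structure, but the proof has a genuine gap precisely where you yourself flag the main obstacle. The paper does \emph{not} argue by induction on $k$; instead it proves a structural lemma (Lemma~\ref{minlem}) by writing the witnessing minor as $M_H\backslash H_1/H_2\cong M(K_4)$, commuting the deletions and contractions \emph{outside} $H$ past the splitting operation to obtain a minor $P$ of $M$ with $P_H\backslash H_1'/H_2'\cong M(K_4)$ for $H_1',H_2'\subseteq H$. If $H_1'\neq\emptyset$, deleting one element $h\in H_1'$ gives $P\backslash h\in\mathcal{G}_{k-1}$, which is exactly conclusion (i); note this descent only works through \emph{deletion} of an element of $H$, whereas your symmetric treatment of $M\backslash e$ and $M/e$ would in the contraction branch produce a coextension rather than the one-element extension the theorem asserts. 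If $H_1'=\emptyset$, the decisive idea---entirely absent from your proposal---is that $P_H$ or $P_H/H'$ \emph{equals} $M(K_4)$, so via the element-splitting matroid $P_H'$ (which satisfies $P_H'\backslash a\cong P_H$ and $P_H'/a\cong P$) one exhibits $P$ as a quotient of $M(K_4)$, or a coextension of such a quotient by at most $k$ elements. The finiteness you hope for then comes for free: Lemma~\ref{qk4} enumerates all graphic quotients of $M(K_4)$ (a rank-$2$, six-element count on three vertices) as $M(Q_1),\dots,M(Q_4)$, and $Q_1$ is discarded because $M(K_4)$ is not a gammoid. Without identifying this quotient structure, your ``finite case analysis using that $M(K_4)$ is small'' is not justified, since $M$ and $T$ can be arbitrarily large.

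Two smaller points. First, carrying $U_{2,4}$ along is unnecessary: $M_T$ is a binary matroid, so the only relevant obstruction from Theorem~\ref{gammoid} is $M(K_4)$. Second, the ``preliminary lemma'' you propose on how splitting commutes with minors is indeed needed, but only for elements \emph{outside} $T$ (where $(M\backslash e)_T\cong M_T\backslash e$ and likewise for contraction); for elements of $T$ the paper uses the single identity $P_H\backslash h\cong(P\backslash h)_{H-h}$, and that asymmetry is what routes the two cases to conclusions (i) and (ii) respectively.
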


\begin{thm} \label{mt2}
	Let a binary gammoid be $M$ then $M \notin  \mathcal{G}_3$ if and only if $M$ do not contain $M(G_i)$ minor, where the graph $G_i$ is shown in Figure \ref{G3}, for $i=2,3,4$. 
\end{thm}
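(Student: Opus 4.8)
The plan is to deduce Theorem~\ref{mt2} from the general reduction in Theorem~\ref{mt1}, specialised to $k=3$, together with (a) a closure property of the families $\mathcal{G}_k$ under taking extensions and (b) a finite case analysis. Throughout I will use, via Theorem~\ref{gammoid}, that a binary matroid fails to be a binary gammoid precisely when it has a minor isomorphic to $U_{2,4}$ or $M(K_4)$, and, via Theorem~\ref{G2}, that $M(G_1)$ is the unique minor-minimal member of $\mathcal{G}_2$. It suffices to prove the equivalent statement: a binary gammoid $M$ lies in $\mathcal{G}_3$ if and only if it has a minor isomorphic to $M(G_i)$ for some $i\in\{2,3,4\}$.

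First I would establish that, for every $k$, the family $\mathcal{G}_k$ is closed under extension inside the class of binary gammoids: if $N$ is a minor of a binary gammoid $M$ and $N\in\mathcal{G}_k$, then $M\in\mathcal{G}_k$. The key is that splitting commutes with deletion and contraction of elements outside the splitting set, i.e.\ for $e\in E(M)\setminus T$ one has $M_T\backslash e=(M\backslash e)_T$ and $M_T/e=(M/e)_T$; indeed the row added in forming $M_T$ carries a $0$ in column $e$ and so is untouched when one pivots on a nonzero entry of column $e$, the loop case being subsumed by the deletion identity. Hence if $N=M/C\backslash D$ and $T\subseteq E(N)$ is a $k$-set with $N_T$ not a binary gammoid, then $N_T=M_T/C\backslash D$ is a minor of $M_T$, so $M_T$ is not a binary gammoid either, whence $M\in\mathcal{G}_k$. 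Granting this, the ``if'' part of the restated equivalence follows once I verify by direct computation that each of $M(G_2),M(G_3),M(G_4)$ lies in $\mathcal{G}_3$: for each $G_i$ I would exhibit a concrete $3$-element subset $T_i\subseteq E(M(G_i))$ and check, from the matrix description of splitting, that $(M(G_i))_{T_i}$ has a $U_{2,4}$- or $M(K_4)$-minor; the closure lemma then propagates this to every binary gammoid having some $M(G_i)$ as a minor.

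For the ``only if'' part, suppose $M\in\mathcal{G}_3$. Applying Theorem~\ref{mt1} with $k=3$, and using that the unique minor-minimal member of $\mathcal{G}_2$ is $M(G_1)$, we obtain a minor $P$ of $M$ which is either a single-element extension of $M(G_1)$, or is one of $M(Q_2),M(Q_3),M(Q_4)$ or an extension of one of these by at most three elements (the graphs $Q_i$ as in Figure~\ref{qk4fig}). Since $P$ is a minor of the binary gammoid $M$, $P$ is itself a binary gammoid, which sharply restricts the possibilities. I would then enumerate, up to isomorphism, all binary single-element extensions of $M(G_1)$ that are gammoids and all binary-gammoid extensions of $M(Q_2),M(Q_3),M(Q_4)$ by at most three elements, and show in each case that the resulting matroid contains some $M(G_i)$, $i\in\{2,3,4\}$, as a minor — the graphs $G_2,G_3,G_4$ being precisely the minimal ones that arise. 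As $M$ has $P$ as a minor and $P$ has some $M(G_i)$ as a minor, $M$ has $M(G_i)$ as a minor, completing the equivalence.

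The bulk of the work, and the main obstacle, is the finite case analysis in the last paragraph. One must list the relevant single-element extensions of $M(G_1)$ and the extensions of $M(Q_2),M(Q_3),M(Q_4)$ by up to three elements, discard those creating a $U_{2,4}$- or $M(K_4)$-minor (hence not realisable inside a binary gammoid), sort the survivors into isomorphism classes, and for each survivor produce an explicit sequence of deletions and contractions realising one of $M(G_2),M(G_3),M(G_4)$; special care is needed for extensions that do not already display such a minor, which must be shown either to be inessential (not in $\mathcal{G}_3$, hence only ever met inside a larger minor that does contain some $M(G_i)$) or to acquire one after a further element. Keeping this list exhaustive and non-redundant, so that no minimal obstruction beyond $G_2,G_3,G_4$ slips through, is where the care lies; by comparison the commuting identities and the three small ``bad splitting'' verifications are routine.
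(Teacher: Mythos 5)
Your outline follows the same skeleton as the paper's proof (reduce via Theorem~\ref{mt1}, i.e.\ Lemmas~\ref{minlem}, \ref{rel_min_quo} and \ref{qk4}, to quotients of $M(K_4)$), and the forward direction --- explicit bad $3$-element splittings of $M(G_2),M(G_3),M(G_4)$ plus the minor-closure of $\mathcal{G}_k$ via the commuting identities $M_T\backslash e=(M\backslash e)_T$ and $M_T/e=(M/e)_T$ for $e\notin T$ --- is exactly what the paper does (the paper leaves the closure step implicit; making it explicit is an improvement). Where you diverge is the converse: you plan to enumerate all binary-gammoid extensions of $M(Q_2),M(Q_3),M(Q_4)$ by up to three elements and check each for a $G_i$-minor, and you describe this as the bulk of the work. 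That enumeration is unnecessary, because $M(Q_i)\cong M(G_i)$ for $i=2,3,4$ (compare Figures~\ref{qk4fig} and~\ref{G3}), and any extension or coextension of $M(Q_i)$ by whatever elements trivially has $M(Q_i)$ itself as a minor (delete, respectively contract, the added elements). So once Lemma~\ref{rel_min_quo} and Lemma~\ref{qk4} put $M(Q_i)$ inside $P$, the desired $M(G_i)$-minor of $M$ is already in hand with no case analysis at all; this is the paper's one-line conclusion. The one branch that genuinely needs a separate word is case (i) of Theorem~\ref{mt1}, where $P$ is a single-element extension of the minimal minor $M(G_1)$ of $\mathcal{G}_2$; you correctly flag this case (the paper's converse silently drops it), but again no enumeration is needed: contracting one of the two doubled edges of $G_1$ (say the bottom edge, turning its parallel mate into a loop) yields exactly $G_2$, so $M(G_1)$, and hence every extension of it, already contains $M(G_2)$ as a minor. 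In short, your plan is sound and would succeed, but the ``finite case analysis'' you defer to is vacuous once you notice $Q_i\cong G_i$; as written the proposal leaves its main step unexecuted, so you should either carry out that (trivial) step or, better, replace it by the two observations above.
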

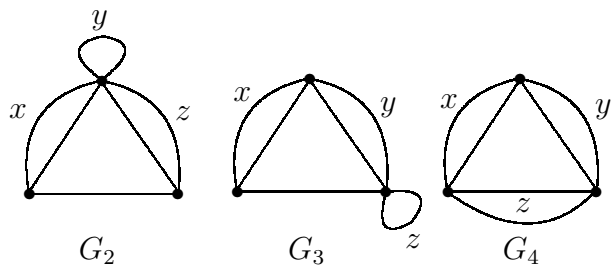
\begin{figure}[h!]
	\centering
\unitlength 1mm 
\linethickness{0.4pt}
\ifx\plotpoint\undefined\newsavebox{\plotpoint}\fi 
\begin{picture}(82.015,31.5)(0,0)
	\put(25.56,8.438){\circle*{1.5}}
	\put(15.56,23.438){\circle*{1.5}}
	\put(6.06,8.469){\circle*{1.5}}
	\put(5.908,8.487){\line(1,0){19.534}}
	\multiput(15.345,23.588)(.0336731392,-.0485889968){309}{\line(0,-1){.0485889968}}
	\multiput(15.493,23.291)(-.0336655052,-.0512787456){287}{\line(0,-1){.0512787456}}
	\qbezier(15.642,23.44)(27.014,21.135)(25.602,8.426)
	\qbezier(5.98,8.574)(3.75,20.764)(15.196,23.44)
	\qbezier(15.345,23.737)(9.547,28.048)(15.642,29.386)
	\qbezier(15.642,29.386)(21.662,28.419)(15.493,23.588)
	\put(52.912,8.73){\circle*{1.5}}
	\put(42.912,23.73){\circle*{1.5}}
	\put(33.412,8.76){\circle*{1.5}}
	\put(33.26,8.779){\line(1,0){19.534}}
	\multiput(42.697,23.88)(.0336731392,-.0485889968){309}{\line(0,-1){.0485889968}}
	\multiput(42.845,23.583)(-.0336655052,-.0512787456){287}{\line(0,-1){.0512787456}}
	\qbezier(42.994,23.731)(54.366,21.427)(52.954,8.717)
	\qbezier(33.332,8.866)(31.102,21.055)(42.548,23.731)
	\qbezier(52.953,8.723)(59.94,9.541)(56.224,4.71)
	\qbezier(56.224,4.71)(51.021,1.811)(52.953,8.723)
	\put(80.561,8.73){\circle*{1.5}}
	\put(70.561,23.73){\circle*{1.5}}
	\put(61.061,8.76){\circle*{1.5}}
	\put(60.909,8.779){\line(1,0){19.534}}
	\multiput(70.346,23.88)(.0336731392,-.0485889968){309}{\line(0,-1){.0485889968}}
	\multiput(70.494,23.583)(-.0336655052,-.0512787456){287}{\line(0,-1){.0512787456}}
	\qbezier(70.643,23.731)(82.015,21.427)(80.603,8.717)
	\qbezier(60.981,8.866)(58.751,21.055)(70.197,23.731)
	\qbezier(60.981,8.723)(72.575,.25)(80.603,9.02)
	\put(15,1){\makebox(0,0)[cc]{$G_2$}}
	\put(42.5,1){\makebox(0,0)[cc]{$G_3$}}
	\put(70.75,1){\makebox(0,0)[cc]{$G_4$}}
	\put(4.5,19.5){\makebox(0,0)[cc]{$x$}}
	\put(15.25,31.5){\makebox(0,0)[cc]{$y$}}
	\put(26.25,19.25){\makebox(0,0)[cc]{$z$}}
	\put(34,21.75){\makebox(0,0)[cc]{$x$}}
	\put(53.25,20){\makebox(0,0)[cc]{$y$}}
	\put(56.5,2.25){\makebox(0,0)[cc]{$z$}}
	\put(61.25,20.75){\makebox(0,0)[cc]{$x$}}
	\put(81.5,19.75){\makebox(0,0)[cc]{$y$}}
	\put(71,7){\makebox(0,0)[cc]{$z$}}
\end{picture}

	\caption{Minimal minor of the class $\mathcal{G}_3$}
	\label{G3}
\end{figure}

\begin{thm}
	Let a binary gammoid be $M$, then element splitting matroid $M_H'$ is a binary gammoid for any $H \subseteq E(M)$, with $|H| \geq 3$ if and only if $M$ do not contain minor $M(G_6)$, where $G_6$ is as shown in Figure-\ref{elementsp1}.
\end{thm}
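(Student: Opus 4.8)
The plan is to work throughout with the excluded‑minor description of Theorem~\ref{gammoid} and with the two defining identities of the element splitting operation: if $a$ denotes the coextension element created in passing from $M$ to $M_H'$, then $M_H'/a=M$ and $M_H'\backslash a=M_H$, where $M_H$ is the ordinary splitting matroid. Since $M_H'$ is a binary matroid it has no $U_{2,4}$‑minor, so by Theorem~\ref{gammoid} it is a binary gammoid if and only if it has no $M(K_4)$‑minor; thus the entire statement is about creating, respectively excluding, an $M(K_4)$‑minor in $M_H'$. I shall also use the elementary matrix‑level fact that, for a fixed set $H$, element splitting commutes with deletion and contraction of elements outside $H$: if $X,Y\subseteq E(M)$ are disjoint from $H$ then $(M\backslash X/Y)_H'=M_H'\backslash X/Y$ (contract $Y$ by pivots on rows of the original representing matrix; the added row and the added column $a$ are then untouched).

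For the ``only if'' direction, suppose $M$ has a minor isomorphic to $M(G_6)$; fix $X,Y\subseteq E(M)$ with $M\backslash X/Y\cong M(G_6)$, and let $H_0\subseteq E(M(G_6))$ be a transversal of the three parallel classes of $M(G_6)$, that is, one element from each of the two two‑element classes together with the unique element of the one‑element class; then $|H_0|=3$ and $H_0\cap(X\cup Y)=\emptyset$. The matroid $M(G_6)_{H_0}'$ is binary, has rank $3$ (the added row is independent of the two original rows, being the only row meeting the column $a$), and is simple: the transversal condition splits each parallel pair of $M(G_6)$, and $a$ is neither a loop nor parallel to an old element. A simple binary matroid of rank $3$ on $6$ elements is $M(K_4)$, so $M(G_6)_{H_0}'\cong M(K_4)$. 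By the commutation fact, $M_{H_0}'\backslash X/Y=(M\backslash X/Y)_{H_0}'\cong M(K_4)$, hence $M_{H_0}'$ is not a binary gammoid although $|H_0|=3$; this is exactly the failure of ``$M_H'$ is a binary gammoid for all $H$ with $|H|\ge 3$'', and producing a single such $H$ suffices.

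For the ``if'' direction I would first prove the auxiliary statement: for every $k\ge 2$, every matroid in $\mathcal G_k$ has an $M(G_6)$‑minor. The base case $k=2$ is Theorem~\ref{G2} together with the observation that contracting one copy of a doubled edge of $G_1$ (Figure~\ref{G2fig}) and deleting the resulting loop produces $G_6$; note in passing that $M(K_4)/e\cong M(G_6)$ for every $e$. For the inductive step, apply Theorem~\ref{mt1} to a member of $\mathcal G_k$: it has a minor $P$ which is either a one‑element extension of a minimal minor of $\mathcal G_{k-1}$---so $M$ has a member of $\mathcal G_{k-1}$ as a minor and we are done by the inductive hypothesis---or an extension by at most $k$ elements of one of $M(Q_2),M(Q_3),M(Q_4)$, in which case it is enough to check that each $M(Q_i)$ already contains an $M(G_6)$‑minor, read off Figure~\ref{qk4fig} by the same contraction‑plus‑loop/parallel‑edge‑deletion argument as for $G_1$ and for $G_2,G_3,G_4$ of Figure~\ref{G3}. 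Consequently, if $M$ has no $M(G_6)$‑minor then $M\notin\mathcal G_k$ for every $k\ge 2$, and so the splitting matroid $M_H$ is a binary gammoid for every $H$ with $|H|\ge 2$.

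Now let $M$ have no $M(G_6)$‑minor, fix $H$ with $|H|\ge 3$, and suppose for contradiction that $M_H'$ has a minor $N=M_H'\backslash X/Y\cong M(K_4)$. If $a\in Y$, then $N$ is a minor of $M_H'/a=M$, contradicting that $M$ is a binary gammoid. If $a\in X$, then $N$ is a minor of $M_H'\backslash a=M_H$, which is a binary gammoid by the previous paragraph---contradiction. If $a\in E(N)$, then $a$ is neither a loop nor a coloop of $N\cong M(K_4)$, so $N/a\cong M(K_4)/e\cong M(G_6)$; but $N/a=M_H'\backslash X/(Y\cup\{a\})=(M_H'/a)\backslash X/Y=M\backslash X/Y$, so $M$ has an $M(G_6)$‑minor---contradiction. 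Hence $M_H'$ has no $M(K_4)$‑minor, and being binary it has no $U_{2,4}$‑minor either, so $M_H'$ is a binary gammoid. The substantive part of all this is the auxiliary statement, and inside it the case‑by‑case verification, from Figure~\ref{qk4fig}, that the ``irreducible'' base obstructions $Q_2,Q_3,Q_4$ of Theorem~\ref{mt1} each already yield $M(G_6)$; the recursion supplied by Theorem~\ref{mt1}, the reduction to $M(K_4)$‑minors, and the three‑case split on the coextension element are then routine. A secondary point that should be handled carefully is the matrix‑level justification that element splitting commutes with minor operations performed away from $H$.
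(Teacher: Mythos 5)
Your proof is correct and follows essentially the same route as the paper: reduce to excluding an $M(K_4)$-minor via Theorem \ref{gammoid}, split into the three cases according to whether the coextension element $a$ is contracted, deleted, or retained, and in the deleted case invoke the splitting characterization (your auxiliary induction is just an unpacked form of Corollary \ref{maincor} together with the observation that each $M(Q_i)$, $i=2,3,4$, has an $M(G_6)$-minor). You additionally supply details the paper leaves implicit, notably the explicit verification that $M(G_6)_{H_0}'\cong M(K_4)$ for a transversal $H_0$ of the parallel classes and that $M(K_4)/e\cong M(G_6)$, but the argument is the same.
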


\noindent This paper also characterizes a binary gammoid whose es-splitting is a binary gammoid. We proved the theorem stated as below.
\begin{thm}
	Let $M$ be a binary gammoid then es-splitting matroid $M_T^e$ is a binary gammoid, for $T \subseteq E(M)$, with $|T|\geq 2$ and $e \in T$ if and only if $M$ do not contain a minor $M(G_7)$, where $G_7$ is as shown in Figure \ref{es-sp}.
\end{thm}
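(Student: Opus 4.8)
The plan is to reduce the statement, via Theorem~\ref{gammoid}, to excluded minors: $M_T^e$ is a binary gammoid if and only if it has no minor isomorphic to $U_{2,4}$ or $M(K_4)$. So it suffices to show that some admissible es-splitting of $M$ (with $|T|\ge 2$, $e\in T$) contains $U_{2,4}$ or $M(K_4)$ as a minor exactly when $M$ contains $M(G_7)$ as a minor. The first step is to record, from the matrix definition of the es-splitting operation, how it interacts with taking single-element minors. Writing $a_0$ for the element adjoined in the element-splitting step, the facts I expect to need are the coextension identity $M_T^e/a_0 = M$, an identification of $M_T^e\backslash a_0$ with a splitting-type matroid of $M$ relative to $T$, the commutation rules $(M\backslash f)_T^e = M_T^e\backslash f$ and $(M/f)_T^e = M_T^e/f$ for $f\in E(M)\setminus T$, a deletion rule $(M\backslash f)_{T\setminus f}^e = M_T^e\backslash f$ for $f\in T\setminus\{e\}$, and a pivoting rule describing the effect of contracting an element of $T\setminus\{e\}$; the degenerate cases in which $e$ is a loop, a coloop, or parallel to some other element are disposed of at the outset, as $M_T^e$ then collapses to something simpler.

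For the ``if'' direction I argue contrapositively. Suppose $M$ has a minor $N\cong M(G_7)$, say $N = M\backslash D/C$ with $D\cap C=\emptyset$. A direct computation on an explicit binary representation of $M(G_7)$ produces an admissible pair $(T_0,e)$ with $e\in T_0\subseteq E(N)$ and $|T_0|\ge 2$ such that $N_{T_0}^e$ contains $U_{2,4}$ or $M(K_4)$ as a minor. Since $T_0$ is disjoint from $D\cup C$, the commutation rules give $N_{T_0}^e = M_{T_0}^e\backslash D/C$, so $M_{T_0}^e$ is not a binary gammoid and the ``if'' direction follows.

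For the ``only if'' direction, suppose $M_T^e$ has a minor $\mathcal N\in\{U_{2,4},M(K_4)\}$, write $\mathcal N = M_T^e\backslash D/C$ with $D\cap C=\emptyset$, and choose this data so that $|E(M)|$, and then $|T|$, are minimal. If $a_0\in C$ then $\mathcal N = M\backslash D/(C\setminus a_0)$ is a minor of the binary gammoid $M$, which is impossible; so $a_0\notin C$. Using the commutation, deletion, and pivoting rules, the minimality of $|E(M)|$ forces every element of $M$ to be used by $D$, $C$, or the support of $\mathcal N$ up to a bounded surplus, and the minimality of $|T|$ bounds $|T|$. If $a_0\in D$, then $\mathcal N$ is a minor of $M_T^e\backslash a_0$, a splitting-type matroid of $M$ relative to $T$, so $M\in\mathcal G_{|T|}$; Theorem~\ref{G2} ($|T|=2$), Theorem~\ref{mt2} ($|T|=3$), and Theorem~\ref{mt1} (reducing $|T|\ge 4$ to the previous cases) then show $M$ contains one of an explicit short list of graphic minors, and one verifies that each member of that list contains $M(G_7)$ as a minor. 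Finally, if $a_0\in E(\mathcal N)$, only finitely many triples $(M,T,e)$ survive the minimality reductions, and a hands-on check of each — carefully tracking where $e$ sits through every deletion, contraction, and pivot — exhibits an $M(G_7)$ minor in $M$.

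I expect the main obstacle to be the last case of the ``only if'' direction together with the companion reduction of large $|T|$ to $|T|\le 3$: one must show that enlarging $T$ never creates a genuinely new obstruction — typically by exhibiting, inside $M_T^e$ when $|T|$ is large, a minor that is itself an es-splitting, relative to a proper subset of $T$, of a minor of $M$ — and one must keep the residual case analysis small enough to verify by hand while never losing track of the distinguished element $e$. Everything else is bookkeeping built on Theorem~\ref{gammoid} and the splitting characterizations already in hand.
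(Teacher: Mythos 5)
Your overall strategy (reduce to $U_{2,4}$/$M(K_4)$ minors via Theorem~\ref{gammoid} and chase the new element $a_0$ through a minor expression) could in principle be pushed through, but as written it has genuine gaps and misses the one idea that makes the theorem short. The missing idea: by the very definition of es-splitting, $M_T^e$ \emph{is} the element splitting matroid $M(F)'_T$, where $F$ is the graph of $M$ with one edge added in parallel to $e$. This identity reduces the whole theorem to the already-established single-element-coextension result (Theorem~\ref{mtelsp}): if $M_T^e=M(F)'_T$ is not a binary gammoid, then $M(F)$ has an $M(G_6)$ minor, and since $F$ is just $G$ plus one parallel edge, $G$ must have a $G_7$ minor. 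Your proposal never invokes this identity and instead rebuilds the minor-tracking machinery from scratch.

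Two of your steps do not hold up as stated. First, in the case $a_0\in D$ you identify $M_T^e\backslash a_0$ with ``a splitting-type matroid of $M$ relative to $T$'' and conclude $M\in\mathcal G_{|T|}$. In fact $M_T^e\backslash a_0$ is the splitting of the \emph{parallel extension} $M(F)$, not of $M$, so what follows is that $M(F)\in\mathcal G_{|T|}$; you would still need to transfer the resulting forbidden minors of $M(F)$ back to minors of $M$ (this is exactly the $G_6$-to-$G_7$ step the paper performs, and it is not automatic from your setup). Second, the case $a_0\in E(\mathcal N)$ is not a proof at all: the assertion that ``only finitely many triples $(M,T,e)$ survive the minimality reductions'' is itself unjustified (you have not shown the minimal counterexample has bounded size or bounded $|T|$), and the ``hands-on check'' is never exhibited. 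Since you yourself flag this case and the reduction of large $|T|$ as the main obstacles, the argument is an outline of the hard part rather than a proof of it; the parallel-extension identity is precisely what lets the paper avoid both obstacles.
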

\begin{figure}[h]
	\centering
	\unitlength 1mm 
	\linethickness{0.4pt}
	\ifx\plotpoint\undefined\newsavebox{\plotpoint}\fi 
	\begin{picture}(27.81,26.688)(0,0)
		\put(27.06,10.938){\circle*{1.5}}
		\put(17.06,25.938){\circle*{1.5}}
		\put(7.56,10.969){\circle*{1.5}}
		\put(7.408,10.987){\line(1,0){19.534}}
		\multiput(16.845,26.088)(.0336731392,-.0485889968){309}{\line(0,-1){.0485889968}}
		\multiput(16.993,25.791)(-.0336655052,-.0512787456){287}{\line(0,-1){.0512787456}}
		\put(15.5,.75){\makebox(0,0)[cc]{$G_7$}}
		\qbezier(26.75,11)(17.5,.5)(7.25,11)
		\put(9.75,20.25){\makebox(0,0)[cc]{$x$}}
		\put(17,13){\makebox(0,0)[cc]{$y$}}
	\end{picture}
	
	\caption{Forbidden minor for es-splitting of a binary gammoid}
	\label{es-sp}
\end{figure}
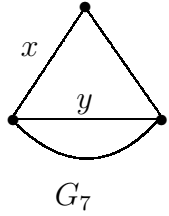
\section{Preliminary Results}	
For $k \geq 2 $, $\mathcal{G}_k$ denote a collection of a binary gammoid whose splitting is not a binary gammoid using a set with k elements. In this paper, we aim to characterize minimal minors that belong to the class $\mathcal{G}_k$, for $k \geq 2$. 

\begin{lem}\label{minlem}
	Let $M$ be a binary gammoid such that $M_H$ has $M(K_4)$ minor for some $H \subseteq E(M)$ with $|H|\geq 2$. Then $M$ contains a minor $P$ with $H \subseteq E(P)$, such that $P$ satisfies one of the following conditions. 
	\begin{enumerate}[(i).]
		\item $P_H \cong M(K_4)$;
		\item $P_H/H' \cong M(K_4)$, for some $H'\subseteq H$;
		\item $P$ is a one element extension of some minimal minor in the class $\mathcal{G}_{k-1}$, for $k \geq 2$.
	\end{enumerate}  
\end{lem}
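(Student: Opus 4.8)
The plan is to strip elements off one at a time, using the standard compatibility of the splitting operation with deletion and contraction, and to separate the two ``terminal'' outcomes $P_H\cong M(K_4)$ and $P_H/H'\cong M(K_4)$ from the ``recursive'' one. The basic facts I will use are the minor formulas for splitting: if $A$ represents $M$ then $M_H$ is represented by $A$ together with one extra row whose support is $H$, and reading off this matrix gives, for $e\in E(M)\setminus H$, that $M_H\backslash e=(M\backslash e)_H$ and $M_H/e=(M/e)_H$, while for $e\in H$ one has $M_H\backslash e=(M\backslash e)_{H\setminus e}$; contracting an element of $H$ changes the splitting set in a representation-dependent way, and the argument will be arranged so that this case never has to be invoked. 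I will also use that a binary matroid is a gammoid exactly when it has no $M(K_4)$-minor, which follows from Theorem~\ref{gammoid} because a binary matroid has no $U_{2,4}$-minor.

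Next, choose $P$ to be a minor of $M$ that is minimal for the property ``$H\subseteq E(P)$ and $P_H$ has an $M(K_4)$-minor'': this collection is nonempty since $M$ lies in it, and $P$ is a binary gammoid. Fix disjoint $C,D\subseteq E(P)$ with $P_H/C\backslash D\cong M(K_4)$. The first step is to prove $C\cup D\subseteq H$. If some $e\in C$ were outside $H$ then $(P/e)_H=P_H/e$, so $(P/e)_H/(C\setminus e)\backslash D\cong M(K_4)$, making $P/e$ a strictly smaller member of the collection and contradicting minimality; the case $e\in D\setminus H$ is handled identically with $(P\backslash e)_H=P_H\backslash e$. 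Since $M(K_4)$ has six elements this also yields $|E(P)|=6+|C|+|D|\le 6+k$, so $P$ is a small matroid.

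Now case on $D$. If $D=\emptyset$ then $P_H/C\cong M(K_4)$ with $C\subseteq H$: for $C=\emptyset$ this is conclusion~(i), and for $C\neq\emptyset$ it is conclusion~(ii) with $H'=C$. If $D\neq\emptyset$, pick $e\in D$; since deletion and contraction of disjoint sets commute we may write $P_H/C\backslash D=(P_H\backslash e)/C\backslash(D\setminus e)$, and $P_H\backslash e=(P\backslash e)_{H\setminus e}$, so $(P\backslash e)_{H\setminus e}/C\backslash(D\setminus e)\cong M(K_4)$; thus $(P\backslash e)_{H\setminus e}$ is non-gammoid and, $P\backslash e$ being a binary gammoid with $|H\setminus e|=k-1$, we get $P\backslash e\in\mathcal G_{k-1}$. (Necessarily $k\ge3$ here: $M_{\{f\}}$ is always the direct sum of $M\backslash f$ with a coloop, hence a gammoid, so for $k=2$ the case $D\neq\emptyset$ cannot occur and only (i) and (ii) arise.) If $P\backslash e$ is itself a minimal minor of $\mathcal G_{k-1}$, then $P$ is a single-element extension of it, which is conclusion~(iii). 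Otherwise $P\backslash e$ has a proper minor $N=(P\backslash e)/A\backslash B$ that is a minimal minor of $\mathcal G_{k-1}$, and then $P':=P/A\backslash B$ is a minor of $M$ with $P'\backslash e=N$, exhibiting $P'$ as a single-element extension of the minimal minor $N$, so conclusion~(iii) holds with $P'$ in place of $P$.

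The step I expect to be the real obstacle is the bookkeeping of which set witnesses membership in $\mathcal G_{k-1}$. When $P\backslash e$ is cut down to a minimal minor $N$ of $\mathcal G_{k-1}$, the additional deletions and contractions may meet $H\setminus e$, so the minor $P'$ delivered in case~(iii) need not contain $H$; consequently the clause ``$H\subseteq E(P)$'' is genuinely attached only to conclusions (i) and (ii), where $P$ is kept intact and $H\subseteq E(P)$ by construction. One also has to check that the $P'$ of case~(iii) again lies in $\mathcal G_k$: if $N_J$ has an $M(K_4)$-minor for a $(k-1)$-set $J$, then $P'_{J\cup e}\backslash e=(P'\backslash e)_J=N_J$ has one too, hence so does $P'_{J\cup e}$ with $|J\cup e|=k$; this is what makes minimality (in the intended application to a minimal minor of $\mathcal G_k$) collapse $P'$ back onto $M$ and drive the induction underlying Theorem~\ref{mt1}. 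Making these compatibilities line up, rather than the routine matrix computations for splitting, is where the care is concentrated.
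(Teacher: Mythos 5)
Your proposal is correct and follows essentially the same route as the paper: use the compatibility of splitting with deletion/contraction of elements outside $H$ to pass to a minor $P$ containing $H$ with $P_H\backslash H_1'/H_2'\cong M(K_4)$ for $H_1',H_2'\subseteq H$, then case on whether $H_1'$ is empty, with a nonempty $H_1'$ yielding the recursive conclusion (iii) via $P\backslash e\in\mathcal G_{k-1}$. Your treatment is in fact slightly more careful than the paper's at the point where it asserts that $P\backslash h$ is \emph{the minimal minor} of $\mathcal G_{k-1}$ (it is a priori only a member, and one must pass to a further minor as you do), but this is a refinement of, not a departure from, the paper's argument.
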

\begin{proof}
Let a binary gammoid $M$ be such that $M_H$ contains $M(K_4)$ minor. Thus there exists subsets $H_1$ and $H_2$ of $E(M)$ such that $M_H \backslash H_1 /H_2 \cong M(K_4)$. Let $H_i'=H \cap H_i$ and $H_i''=H_i-H_i'$, for $i=1,2$. Then $M_H\backslash H_1''/H_2'' \cong (M\backslash H_1''/H_2'')_H$ as each $H_i''$ is disjoint from $H$. Let $P=M\backslash H_1''/H_2''$. Then $P$ is a minor of $M$ containing $H$. Consider $P_H\backslash H_1'/H_2' \cong (M\backslash H_1''/H_2'')_H \backslash H_1'/H_2'\cong M_H\backslash H_1''/H_2''\backslash H_1'/H_2'\cong M_H \backslash H_1'\cup H_1''/H_2'\cup H_2'' \cong M_H\backslash H_1/H_2 \cong M(K_4)$. That is $P_H\backslash H_1'/H_2' \cong M(K_4)$.\\
If $H_1' = H_2'=\emptyset $ Then (i) holds. \\
If $H_1'=\emptyset$ and $H_2' \neq \emptyset$ Then (ii) holds.\\
If $H_1' \neq \emptyset$. $H_1' \subseteq H$ then $|H_1'| \leq k$. Now, if $|H_1'|= k$, then $H_1'=H$ and $H_2'=\emptyset$ it follows that $M(K_4) \cong P_H\backslash H_1' \cong P_H\backslash H \cong P\backslash H$. Thus, a binary gammoid $M$ contains $M(K_4)$ minor, a contradiction. Hence $|H_1'|\neq k$. If $0 < |H_1'| < k$. Let $h \in H_1'$, $T=H-{h}$ and $T'=H_1'-{h}$. Assume that $N=P\backslash h$ then $N$ is a minor of $P$ and hence it is a minor of a binary gammoid $M$, thus $N$ is a binary gammoid and $|T|=k-1$. $P_H\backslash h \cong (P\backslash h)_{(H-{h})} \cong N_T$. As $P_H\backslash H_1'/H_2' \cong M(K_4)$ implies that $N_T\backslash T'/H_2' \cong M(K_4)$, thus $N$ is the minimal minor in the class $\mathcal{G}_{k-1}$. As $N=P\backslash h$, $P$ is the extension of $N$ by one element.  
\end{proof}

\begin{lem}\label{rel_min_quo}
Let a binary gammoid $P$ be as stated in Lemma \ref{minlem}. There is a binary gammoid $Q$ with $a \in E(Q)$, such that $Q\backslash a \cong M(K_4)$ and $P \cong Q/a$ or $P$ is coextension of $Q/a$ by not more than k elements. 	
\end{lem}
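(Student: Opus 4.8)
The plan is to realise $P$, or a suitable contraction of it, as a contraction of a single-element extension of $M(K_4)$, simply by reading off the matrices that define the splitting and element-splitting operations. The first step is to record the basic relationship between the three matroids in play. Write $P_H'$ for the element-splitting matroid of $P$ with respect to $H$, and let $a$ be the element adjoined in that operation. In the defining matrix the column labelled $a$ is the unit vector whose only nonzero entry lies in the newly adjoined row, so contracting $a$ simply erases that row; hence $P_H'/a\cong P$, whereas deleting the column $a$ returns $P_H'\backslash a\cong P_H$. In particular $P_H'$ is a binary matroid, and so is every minor of it.

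Next I would invoke Lemma \ref{minlem}. The cases that concern us are (i) $P_H\cong M(K_4)$ and (ii) $P_H/H'\cong M(K_4)$ for some nonempty $H'\subseteq H$; these are treated together by allowing $H'=\emptyset$ in case (i), while case (iii) of that lemma already exhibits $P$ in the form wanted for Theorem \ref{mt1}(i) and so needs no reduction here. In either case put $Q:=P_H'/H'$. Since $a$ is the newly adjoined element, $a\notin E(P)$, hence $a\notin H'\subseteq H\subseteq E(P)$; therefore $a\in E(Q)$, and $Q$, being a minor of $P_H'$, is binary. As $\{a\}$ and $H'$ are disjoint, deletion of $a$ commutes with contraction of $H'$, and I would compute
\[
Q\backslash a=(P_H'\backslash a)/H'=P_H/H'\cong M(K_4),\qquad Q/a=(P_H'/a)/H'=P/H'.
\]
If $H'=\emptyset$ the second equality reads $P\cong Q/a$; if $H'\neq\emptyset$ it gives $P/H'\cong Q/a$, that is, $P$ is a coextension of $Q/a$ by the elements of $H'$, and $|H'|\le|H|=k$. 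This yields the required matroid $Q$.

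I do not expect a serious obstacle; the content is bookkeeping, with two points needing care. First, one must check that it is the contraction of $a$ (not its deletion) that recovers $P$ from $P_H'$; this is immediate from the unit-vector shape of the column $a$ and holds whether or not $a$ happens to be a coloop of $P_H'$. Second, one must keep $\{a\}$ disjoint from $H'$ so that the interchange of deletion and contraction above is legitimate, which is automatic because $a$ is new. It is also worth noting that the matroid $Q$ obtained here is a binary matroid but not a gammoid, since $Q\backslash a\cong M(K_4)$ is one of its minors; this is the intended sense of the statement, the purpose of the lemma being to reduce the determination of the minimal members of $\mathcal{G}_k$ to an analysis of the (finitely many) single-element extensions of $M(K_4)$ together with their contractions and coextensions, which in turn produces the graphs $Q_2$, $Q_3$, $Q_4$.
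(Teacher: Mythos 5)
Your proposal is correct and follows essentially the same route as the paper: both construct $Q=P_H'/H'$ from the element-splitting matroid and use the identities $P_H'\backslash a\cong P_H$ and $P_H'/a\cong P$ to get $Q\backslash a\cong M(K_4)$ and $Q/a\cong P/H'$; your only deviation is the cosmetic one of merging the paper's two cases by allowing $H'=\emptyset$. Your closing remark that $Q$ cannot literally be a gammoid (since it has an $M(K_4)$ minor) is a fair correction to the lemma's wording.
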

\begin{proof}
Let $H \subseteq E(P)$ and $P$ be a binary gammoid such that $P_H \cong M(K_4)$ or $P_H/H' \cong M(K_4)$ for some subset $H'$ of $H$. From the definition of splitting and element splitting operation, we can easily verify the relations, $P_H'\backslash a \cong P_H$ and $P_H'/a \cong P$ where $P_H'$ is element splitting of the binary gammoid $P$ using a set $H$. \\ 
Case-(i). If $P_H \cong M(K_4)$, We assume that $Q=P_H'$ where $E(Q)= E(P)\cup a$. Then $Q/a \cong P_H'/a \cong P$ and $P_H'\backslash a \cong Q\backslash a\cong P_H \cong M(K_4)$. Thus $P \cong Q/a$ \\
Case-(ii). If  $P_H/H' \cong M(K_4)$ then we assume that $Q = P_H'/H'$ thus $ Q \backslash a \cong P_H'/H'\backslash a \cong P_H'\backslash a /H' \cong P_H/H'\cong M(K_4)$. Hence $Q/a$ is quotient of $M(K_4)$. Also $Q/a = P_H'/H'/a \cong P_H'/a/H' \cong P/H'$. Hence $P$ is coextension of $Q/a$ by $H'$ and $|H'| \leq k$. Thus we say that $P$ is coextension of $Q/a$ by not more than k elements.
\end{proof}
\noindent For a binary matroid $M$, if $M \cong Q\backslash a$ for some matroid $Q$ with $a \in E(Q)$,  then $Q/a$ is called quotient of $M$ and $M$ is called elementary lift of $Q/a$. Thus to find minimal minor of the class $\mathcal{G}_k$, we need to find minimal minor $P$ of a binary gammoid $M$,  such that $P_H \cong M(K_4)$ or $P_H/H' \cong M(K_4)$ for some subset $H'$ of $H$, where $H\subseteq (M)$. Thus by Lemma \ref{rel_min_quo}, $P\cong Q/a$ or $P$ is extension of $Q/a$ not more than k elements. Thus we need to find $Q/a$, that is quotient of $M(K_4)$. Hence, in the following lemma, we find graphic quotient of $M(K_4)$. 
\begin{lem} \label{qk4}
Every graphic quotient of $M(K_4)$ is isomorphic to one of the matroid $M(Q_i)$ where $Q_i$ is shown in Figure \ref{qk4fig}, for $i=1,2,3,4$. 
\end{lem}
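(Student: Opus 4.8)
The plan is to exploit the duality between quotients and lifts: a graphic matroid $M(Q_i)$ is a quotient of $M(K_4)$ if and only if, dually, $M(K_4)^* = M(K_4)$ (since $K_4$ is self-dual) is a quotient of $M(Q_i)^*$, equivalently $M(Q_i)^*$ is a lift of $M(K_4)$. So up to duality I am looking for matroids $N$ with a single-element extension $\widehat N$ such that $\widehat N/e \cong M(K_4)$; then $Q_i$ will be a plane graph whose dual realizes $N$. In practice it is cleaner to argue directly: a rank-$r$ graphic matroid $M(G)$ is an elementary quotient of $M(K_4)$ iff there is a modular cut, or equivalently iff $M(G)$ arises from $M(K_4)$ by a Higgs lift/factorization through a single-element coextension. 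Since $M(K_4)$ has rank $3$ on $6$ elements, any elementary quotient has rank $2$ or $3$; the rank-$3$ case forces the quotient to equal $M(K_4)$ itself (no proper elementary quotient of the same rank, as $M(K_4)$ is connected), giving $Q_1 = K_4$.

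First I would reduce to the rank-$2$ case: an elementary quotient $Q$ of $M(K_4)$ of rank $2$ is obtained by a single coline contraction, so $Q \cong M(K_4)/f / g$-type structure is wrong — rather, by the characterization of elementary quotients via linear subclasses, $Q$ corresponds to choosing a linear subclass $\mathcal{L}$ of the hyperplanes (lines) of $M(K_4)$, and the rank-$2$ quotients are exactly those where the ``new point'' lies on enough lines to drop the rank. Concretely I would enumerate the possibilities for how the $6$ elements of $K_4$ partition into parallel classes of the rank-$2$ quotient: a rank-$2$ binary matroid is just a collection of points on a line, i.e. at most $3$ parallel classes (since it must be binary, hence a restriction of $U_{2,3}$). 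The quotient relation forces each line (triangle) of $M(K_4)$ to stay dependent, which constrains which tripartitions of $E(K_4)$ into (possibly empty) parallel classes are admissible. Working this out yields a short finite list, and translating each admissible parallel-class partition back into a graph $Q_i$ (a cycle with multiple edges / a theta-graph / etc.) gives $Q_2, Q_3, Q_4$.

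The key combinatorial step is the enumeration: the four triangles of $K_4$ are $\{123,145,246,356\}$ (labelling edges suitably), and for $M(G)$ with rank $\le 2$ to be a quotient of $M(K_4)$ each such triple must be dependent in $M(G)$, i.e. each triangle's edge set must lie within at most two parallel classes of $M(G)$. I would show this cotree condition is equivalent to requiring the complementary pair (the perfect matchings of $K_4$, which are the cocircuits / bonds) to behave well, and check that exactly the partitions giving $M(Q_2), M(Q_3), M(Q_4)$ survive, with all of them genuinely graphic and genuinely quotients (the latter verified by exhibiting the coextension $M(K_4) = \widehat{M(Q_i)}/a$ explicitly, or by citing that a graphic matroid of rank $2$ on $6$ elements with the stated dependency pattern is automatically an elementary quotient via Lemma \ref{rel_min_quo}). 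I expect the main obstacle to be bookkeeping: making sure no admissible partition is missed and that the resulting graphs coincide (up to isomorphism and series/parallel adjustments) with the three pictured graphs, rather than producing spurious extra cases that turn out isomorphic to one already listed or non-graphic.
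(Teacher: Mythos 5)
Your overall strategy---reduce to the rank-$2$ case and enumerate the possible parallel-class structures of the quotient---is genuinely different from the paper's, which instead realizes the quotient as a connected multigraph on $3$ vertices and $6$ edges, bounds the number of loops and the edge multiplicities using the simplicity of $M(K_4)\cong N\backslash x$, and then appeals to the short catalogue of multigraphs on three vertices. Your route could in principle work, but the combinatorial condition you propose to drive the enumeration is not correct. You require each triangle of $K_4$ to be dependent in the rank-$2$ quotient and assert that this is equivalent to the triangle's edges lying in at most two parallel classes. Neither half of this is usable: in a rank-$2$ matroid \emph{every} $3$-element set is dependent, so the dependence requirement is vacuous and eliminates nothing; and the ``at most two parallel classes'' reformulation is not equivalent to dependence---worse, it is violated by $M(Q_4)$ itself, whose three parallel classes are precisely the three perfect matchings of $K_4$, so that every triangle of $K_4$ meets all three classes. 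Taken literally, your test would discard a matroid that must appear on the list, and taken as the vacuous dependence condition it leaves you with all rank-$2$ binary matroids on six elements as candidates.

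The condition you actually need is the standard characterization of quotients: every flat of the quotient $Q$ must be a flat of $M(K_4)$. For a rank-$2$ quotient this says that the set of loops of $Q$, and each union of the loop set with a parallel class, must be closed in $M(K_4)$, i.e.\ must be a single edge, a perfect matching, or a triangle. Running the enumeration with that condition does terminate correctly: either there are no loops and the three parallel classes are the three perfect matchings (giving $M(Q_4)$), or there is exactly one loop and the classes have sizes $1,2,2$ (giving $M(Q_2)\cong M(Q_3)$). Note also that, as written, your enumeration of ``tripartitions into parallel classes'' does not account for loops at all, which is exactly where $Q_2$ and $Q_3$ come from. So the proposal has the right shape but is missing the correct key lemma; with the condition you state, the argument does not close.
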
	
\begin{figure}[h!]
	\centering
\unitlength 1mm 
\linethickness{0.4pt}
\ifx\plotpoint\undefined\newsavebox{\plotpoint}\fi 
\begin{picture}(105.583,31.001)(0,0)
	\put(49.129,10.053){\circle*{1.5}}
	\put(39.129,25.053){\circle*{1.5}}
	\put(29.629,10.084){\circle*{1.5}}
	\put(29.476,10.102){\line(1,0){19.534}}
	\multiput(38.913,25.203)(.0336763754,-.0485889968){309}{\line(0,-1){.0485889968}}
	\multiput(39.062,24.906)(-.0336655052,-.0512787456){287}{\line(0,-1){.0512787456}}
	\qbezier(39.211,25.055)(50.582,22.75)(49.17,10.041)
	\qbezier(29.548,10.189)(27.319,22.379)(38.765,25.055)
	\qbezier(38.913,25.352)(33.116,29.663)(39.211,31.001)
	\qbezier(39.211,31.001)(45.231,30.034)(39.062,25.203)
	\put(76.48,10.345){\circle*{1.5}}
	\put(66.48,25.345){\circle*{1.5}}
	\put(56.98,10.375){\circle*{1.5}}
	\put(56.828,10.394){\line(1,0){19.534}}
	\multiput(66.265,25.495)(.0336731392,-.0485889968){309}{\line(0,-1){.0485889968}}
	\multiput(66.413,25.198)(-.0336655052,-.0512787456){287}{\line(0,-1){.0512787456}}
	\qbezier(66.562,25.346)(77.934,23.042)(76.522,10.332)
	\qbezier(56.9,10.481)(54.67,22.67)(66.116,25.346)
	\qbezier(76.522,10.338)(83.509,11.156)(79.792,6.325)
	\qbezier(79.792,6.325)(74.59,3.426)(76.522,10.338)
	\put(104.129,10.345){\circle*{1.5}}
	\put(94.129,25.345){\circle*{1.5}}
	\put(84.629,10.375){\circle*{1.5}}
	\put(84.477,10.394){\line(1,0){19.534}}
	\multiput(93.914,25.495)(.0336731392,-.0485889968){309}{\line(0,-1){.0485889968}}
	\multiput(94.062,25.198)(-.0336655052,-.0512787456){287}{\line(0,-1){.0512787456}}
	\qbezier(94.211,25.346)(105.583,23.042)(104.171,10.332)
	\qbezier(84.549,10.481)(82.319,22.67)(93.765,25.346)
	\qbezier(84.549,10.338)(96.144,1.865)(104.171,10.635)
	\put(22.38,10.007){\circle*{1.35}}
	\put(4.83,10.035){\circle*{1.35}}
	\put(4.693,10.051){\line(1,0){17.581}}
	\put(4.976,24.182){\circle*{1.35}}
	\put(22.361,24.407){\circle*{1.35}}
	\put(4.873,24.274){\line(1,0){17.392}}
	\put(22.265,24.274){\line(0,-1){14.047}}
	\put(4.74,9.958){\line(0,1){14.717}}
	\multiput(22.326,24.632)(-.0409585253,-.0336981567){434}{\line(-1,0){.0409585253}}
	\multiput(4.835,24.386)(.0405783837,-.0337338371){430}{\line(1,0){.0405783837}}
	\put(13.454,2){\makebox(0,0)[cc]{$Q_1$}}
	\put(39.102,2){\makebox(0,0)[cc]{$Q_2$}}
	\put(66.221,2){\makebox(0,0)[cc]{$Q_3$}}
	\put(95.022,2){\makebox(0,0)[cc]{$Q_4$}}
\end{picture}

\caption{Quotient for $K_4$}
\label{qk4fig}
\end{figure}
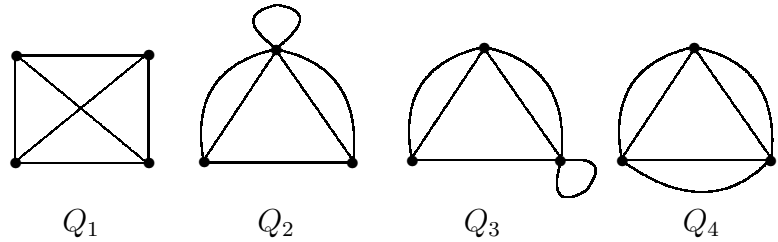
\begin{proof}
	Let $N$ be binary matroid with $x \in E(N)$ such that $N \backslash x \cong M(K_4)$ and $N /x$ be a binary gammoid and hence graphic by Lemma \ref{gammoid}. Then for some connected graph $G$, $N/x \cong M(G)$. If ${x}$ is cocircuit or circuit of $N$ then $N/x \cong N \backslash x \cong M(K_4)$. Thus $G \cong Q_1$.
	
	If ${a}$ is not a circuit or a cocircuit of $N$. Then $N \backslash x $ contains 4 nodes and 6 arcs. Then $N$ contains 4 nodes and 7 arcs. Thus $r(N/x)=2$ and $E(N/x)=6$. Thus graph $G$ contains 3 nodes and  6 arcs. $G$ cannot be simple graph as there is no simple graph on 3 nodes and 6 arcs.\\
	If $G$ contains more than one loop, then $M(G)\cong N/x$ contains more than one loop, then $N\backslash x \cong M(K_4)$ will contain a loop or 2-circuit, a contradiction. Hence $G$ does not contains more than one loop. Also, If $G$ contains more than two arcs having the same end nodes. Then $M(G) \cong N/x$ contains more than two parallel elements; thus $N\backslash x\cong M(K_4)$ will contain 2-circuit, a contradiction. Thus $G$ contains not more than two parallel arcs having the same end nodes. Hence there are two cases. \\
Case-(i). If $G$ contains one loop, then $G$ can be obtained by adding a loop to a graph with 3 nodes and 5 arcs. By Harary \cite{Har} (page 226), there is only one graph on 3 nodes and 5 arcs. Hence $G\cong Q_2$ and $G\cong Q_3$.\\
Case-(ii). Suppose $G$ does not contain a loop. Then $G$ contains three pairs of parallel arcs. By Harary \cite{Har} (page 226), there is only one graph on 3 nodes and 6 arcs. Hence $G\cong Q_4$.
\end{proof}

\section{Elementary Lifts of Binary Gammoid}
In this section, we characterize a binary gammoid whose elementary lift is a binary gammoid. We state the theorem again.

\begin{thm}\label{mt1}
		Let a binary gammoid be $M$ and $k \geq 2$ such that $M \in \mathcal{G}_k$. Then $M$ contain a minor $P$ for which one of the following holds.\\
	(i) $P$ is isomorphic to the single element extension of some minimal minor of the class $\mathcal{G}_{k-1}$.\\
	(ii) $P = M(Q_i)$ or extension of $M(Q_i)$ by at most k elements, where the graph $Q_i$ is shown in Figure \ref{qk4fig}, for $i=2,3,4$.
\end{thm}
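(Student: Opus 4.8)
The plan is to combine the two structural lemmas already established, Lemma~\ref{minlem} and Lemma~\ref{rel_min_quo}, together with the classification of graphic quotients of $M(K_4)$ in Lemma~\ref{qk4}. Recall that by definition $M\in\mathcal{G}_k$ means there is some $H\subseteq E(M)$ with $|H|=k$ such that the splitting matroid $M_H$ is not a binary gammoid. Since $M$ itself is a binary gammoid, its minor $M_H\backslash\!\!\ \text{(extra row removed)}$ behaves well, and by Theorem~\ref{gammoid} the only obstructions to being a binary gammoid are a $U_{2,4}$ or an $M(K_4)$ minor. The first step is therefore to argue that we may take the obstruction inside $M_H$ to be an $M(K_4)$ minor rather than a $U_{2,4}$: since $M_H$ is obtained from a binary matroid $M$ by adding a binary row, $M_H$ is binary, hence has no $U_{2,4}$ minor, so $M_H$ non-gammoid forces an $M(K_4)$ minor. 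This is the hypothesis of Lemma~\ref{minlem}.

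Next I would apply Lemma~\ref{minlem} directly. It hands us a minor $P$ of $M$ with $H\subseteq E(P)$ satisfying one of three conditions: (i) $P_H\cong M(K_4)$; (ii) $P_H/H'\cong M(K_4)$ for some $H'\subseteq H$; or (iii) $P$ is a single-element extension of a minimal minor of $\mathcal{G}_{k-1}$. Case (iii) is precisely alternative (i) of the theorem to be proved, so in that case we are done immediately. In the remaining two cases, $P$ is as in the hypothesis of Lemma~\ref{rel_min_quo}, and applying that lemma produces a binary gammoid $Q$ with a distinguished element $a$ such that $Q\backslash a\cong M(K_4)$ and either $P\cong Q/a$ or $P$ is a coextension of $Q/a$ by at most $k$ elements.

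Now $Q/a$ is a quotient of $M(K_4)$, and since $Q$ is a binary gammoid so is its minor $Q/a$; hence $Q/a$ is graphic by Theorem~\ref{gammoid}. By Lemma~\ref{qk4}, $Q/a\cong M(Q_i)$ for some $i\in\{1,2,3,4\}$. The final step is to handle the case $i=1$ separately: there $Q_1\cong K_4$ so $Q/a\cong M(K_4)$, which would make $P$ (or a minor of it, after deleting the coextension elements) contain an $M(K_4)$ minor — but $P$ is a minor of the binary gammoid $M$, so this is impossible. Hence $i\in\{2,3,4\}$, and we conclude $P=M(Q_i)$ or $P$ is an extension (coextension) of $M(Q_i)$ by at most $k$ elements, which is exactly alternative (ii). I expect the only delicate point to be the bookkeeping around the word ``extension'' versus ``coextension'' and making sure the bound of ``at most $k$ elements'' is transported correctly through Lemma~\ref{rel_min_quo}; the elimination of the $i=1$ case is conceptually easy but needs the observation that a coextension of $M(K_4)$ still contains $M(K_4)$ as a minor (by contracting the coextension elements). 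Everything else is a straight chaining of the three lemmas.
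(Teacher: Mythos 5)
Your proposal is correct and follows essentially the same route as the paper's own proof: chain Lemma \ref{minlem}, Lemma \ref{rel_min_quo}, and Lemma \ref{qk4}, then discard the $Q_1$ case because a minor of a binary gammoid cannot contain $M(K_4)$. Your explicit remarks that $M_H$ is binary (so the obstruction must be $M(K_4)$ rather than $U_{2,4}$) and that the $i=1$ case survives contraction of the coextension elements are small clarifications of steps the paper leaves implicit, not a different argument.
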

\begin{proof}
	Let a binary gammoid $M$ be such that $M \in \mathcal{G}_k$. Thus, by Lemma \ref{gammoid}, $M_H$ contain a minor $M(K_4)$ for some subset $H$ of $E(M)$ with $|H|\geq 2$. Then by Lemma \ref{minlem}, $M$ contains a minor $P$ containing $H$ such that one of the following holds.\\
	(i). $P_H \cong M(K_4)$ or $P_H/H'\cong M(K_4)$ for some $H' \subseteq H$;\\
	(ii). $P$ is a one element extension of some minimal minor of the class $\mathcal{G}_{k-1}$.\\
	Condition (i) follows immediately from Lemma \ref{minlem}. Now, if  $P_H \cong M(K_4)$ or $P_H/H'\cong M(K_4)$ for some $H' \subseteq H$ then by Lemma \ref{rel_min_quo} there is a minor $Q$ with $a \in E(Q)$ such that $Q \backslash a \cong M(K_4)$ and $P= Q/a$ or extension of $Q/a$ by at most k elements. \\
	By Lemma \ref{qk4}, it is proved that, when $Q\backslash a \cong M(K_4)$ then $Q/a \cong M(Q_i)$, where the graph $Q_i$ is as shown in Figure \ref{qk4fig}, for i=1,2,3,4. If $P=M(Q_1)$ and $P$ is a minor of a binary gammoid, a contradiction. Hence, either $P=M(Q_i)$ or P is an extension of $M(Q_i)$ not more than k elements, for i=2,3,4.
\end{proof}
\begin{corollary}\label{maincor}
	Let $M$ be a binary gammoid and $H \subseteq E(M)$, with $|H|\geq2$. Then $M_H$ is a binary gammoid if and only if $M$ does not contain a minor $M(Q_i)$, for $i=2,3,4$.
\end{corollary}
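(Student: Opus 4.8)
The plan is to deduce Corollary~\ref{maincor} directly from Theorem~\ref{mt1} by an induction on $k\geq 2$, so that the ``single element extension of a minimal minor of $\mathcal{G}_{k-1}$'' alternative in part (i) of the theorem gets unwound into a statement purely about the finitely many graphs $Q_2,Q_3,Q_4$. First I would observe that, by Theorem~\ref{gammoid}, $M_H$ fails to be a binary gammoid if and only if $M_H$ has a minor isomorphic to $U_{2,4}$ or $M(K_4)$; since $M_H$ is binary (it is the vector matroid of a binary matrix), the $U_{2,4}$ case cannot occur, so $M_H$ is not a binary gammoid precisely when $M_H$ has an $M(K_4)$-minor, i.e. precisely when $M\in\mathcal{G}_k$ for the given $H$. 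Hence ``$M_H$ is a binary gammoid for \emph{all} $H$ with $|H|\geq 2$'' is equivalent to ``$M\notin\mathcal{G}_k$ for every $k\geq 2$,'' and I must show this is equivalent to $M$ containing none of $M(Q_2),M(Q_3),M(Q_4)$ as a minor.

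For the ``only if'' direction (contrapositive): suppose $M$ contains $M(Q_i)$ as a minor for some $i\in\{2,3,4\}$. Each $Q_i$ has $3$ vertices and $6$ edges, with $M(Q_i)$ a quotient of $M(K_4)$ by Lemma~\ref{qk4}; I would exhibit explicitly, for each $i$, a two-element subset $H_i\subseteq E(Q_i)$ (the two edges labelled so that adding the splitting row produces an $M(K_4)$-minor — this is exactly the content of Lemma~\ref{rel_min_quo} read backwards, since $M(Q_i)=Q/a$ with $Q\backslash a\cong M(K_4)$ and $Q=(M(Q_i))'_{H_i}$) such that $M(Q_i)_{H_i}$ has an $M(K_4)$-minor. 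Because having an $M(K_4)$-minor is preserved under the operation of splitting along a superset and then restricting (the manipulations already used in the proof of Lemma~\ref{minlem}), it follows that $M_H$ has an $M(K_4)$-minor for a suitable $H\supseteq H_i$, so $M_H$ is not a binary gammoid. For the ``if'' direction, suppose $M_H$ is not a binary gammoid for some $H$ with $|H|\geq 2$; pick such an $H$ of minimum cardinality, say $|H|=k$, so $M\in\mathcal{G}_k$ and, by minimality of $H$, $M$ is not in $\mathcal{G}_{k-1}$ via any $(k-1)$-subset, i.e. $M$ has no minor that is a minimal minor of $\mathcal{G}_{k-1}$. Applying Theorem~\ref{mt1}, alternative (i) is therefore impossible, so alternative (ii) holds: $M$ has a minor that is $M(Q_i)$ or an extension of $M(Q_i)$ by at most $k$ elements, and in either case $M$ contains $M(Q_i)$ as a minor (delete the extra elements), for some $i\in\{2,3,4\}$.

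The main obstacle I expect is making the ``only if'' direction genuinely rigorous rather than a gesture: one has to verify that for each of the three small graphs $Q_i$ there really is a choice of $H_i$ with $M(Q_i)_{H_i}$ having an $M(K_4)$-minor, and that this property propagates up to any matroid containing $M(Q_i)$ as a minor together with an appropriate $H$. The propagation is routine — it is the same commutation of splitting with deletion/contraction of elements disjoint from $H$ that appears in Lemma~\ref{minlem}, namely $M_H\backslash X/Y\cong (M\backslash X/Y)_H$ when $X,Y$ are disjoint from $H$, together with the fact that contracting or deleting elements of $H$ itself only weakens the splitting row — so the real work is the finite case check on $Q_2,Q_3,Q_4$. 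A secondary subtlety is the bookkeeping in the minimality argument: I should be careful that ``$M$ has no minor isomorphic to a minimal minor of $\mathcal{G}_{k-1}$'' really does follow from the minimality of $|H|$, which it does because any such minimal minor $N$ would itself satisfy $N_T\in\mathcal{G}$-bad for some $T$ with $|T|=k-1$, and pulling $T$ back along the minor embedding $N\hookrightarrow M$ (again using the commutation identity) would give a bad $(k-1)$-set inside $M$, contradicting minimality. Once these two points are pinned down the corollary follows immediately.
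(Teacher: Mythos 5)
Your ``if'' direction is sound and is essentially the paper's route, but executed more carefully: the paper's converse simply says ``by Theorem \ref{mt1}, $M$ contains $M(Q_i)$'' without explaining how to dispose of alternative (i) of that theorem, whereas your minimality-of-$|H|$ argument does exactly that (any minimal minor $N$ of $\mathcal{G}_{k-1}$ sitting inside $M$ would, via the commutation $M_T\backslash X/Y\cong(M\backslash X/Y)_T$ for $X,Y$ disjoint from $T$, produce a bad $(k-1)$-set in $M$, contradicting minimality; the base case $k=2$ is covered because $\mathcal{G}_1$ is empty). That part I would accept as written.

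The ``only if'' direction, however, commits to a step that provably fails. You propose to exhibit, for each $i\in\{2,3,4\}$, a \emph{two-element} set $H_i\subseteq E(Q_i)$ such that $M(Q_i)_{H_i}$ has an $M(K_4)$-minor. No such $H_i$ exists: each $M(Q_i)$ has exactly $6$ elements and rank $2$, so $M(Q_i)_T$ has $6$ elements and rank at most $3$ and therefore has an $M(K_4)$-minor only if it is isomorphic to $M(K_4)$ itself; but for every two-element $T$ the split $M(Q_i)_T$ retains a loop or a parallel pair (the paper states this explicitly for $Q_2$ in the proof of Theorem \ref{G2}, and the same column computation works for $Q_3$ and $Q_4$), while $M(K_4)$ is simple. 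The correct witnesses are \emph{three-element} sets: for $Q_2\cong G_2$ the paper's matrix $(A_2)_H$ with $H=\{x,y,z\}$ gives $M(G_2)_H\cong M(K_4)$, and similarly for $G_3,G_4$. Since the corollary only requires some $H$ with $|H|\geq 2$, replacing your $H_i$ by these three-element sets and then pulling back to $M$ exactly as you describe repairs the argument; but as written, your promised finite case check on $Q_2,Q_3,Q_4$ would come up empty.
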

\begin{proof}
If $M$ contain minor $M(Q_i)$, then easy to prove that $M_H$ is not a binary gammoid,for $i=2,3,4$.\\
	Conversely, let $M$ be such that it does not contain $M(Q_i)$ minor, for $i=2,3,4$. On contrary, assume that $M_H$ is not a binary gammoid. Then $M \in \mathcal{G}_k$ for some $k$. By Theorem \ref{mt1}, $M$ contains $M(Q_i)$ minor, for $i=2,3,4$, a contradiction. Hence $M_H$ is a binary gammoid.
\end{proof}
\noindent In this section, we give alternate proof to the theorem given by Borse \cite{ymb_Gammoid}. We state the theorem again.\\
\begin{thm}\cite{ymb_Gammoid}\label{G2}
	Let a binary gammoid be $M$ then $M \in \mathcal{G}_2$ if and only if $M$ contain $M(G_1)$ minor, where graph $G_1$ is shown in the Figure \ref{G2fig}.
\end{thm}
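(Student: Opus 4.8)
The plan is to prove the two implications separately. The ``if'' direction comes down to a single explicit minor computation, while the ``only if'' direction is obtained by applying Theorem~\ref{mt1} with $k=2$ and then carrying out a short finite case analysis.

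For the ``if'' direction, suppose $M(G_1)$ is a minor of $M$, say $M(G_1)\cong M\setminus D/C$ with $D,C$ disjoint and contained in $E(M)\setminus E(M(G_1))$. Let $x$ and $y$ be the two edges of $G_1$ that double an edge of its underlying ``$K_4$ minus an edge'' --- the two curved edges in Figure~\ref{G2fig} --- and put $H=\{x,y\}$. Since $D$ and $C$ are disjoint from $H$, adjoining the splitting row commutes with deleting $D$ and contracting $C$, exactly as in the proof of Lemma~\ref{minlem}, so $(M(G_1))_H$ is a minor of $M_H$; it thus suffices to show $(M(G_1))_H$ is not a binary gammoid. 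I would do this by computing $(M(G_1))_H/x\cong M(K_4)$: starting from a $GF(2)$-matrix for $M(G_1)$, append the splitting row (with a $1$ in each of the columns $x,y$) and then pivot on the new $1$ in column $x$ to contract $x$; the six surviving columns turn out to be precisely the nonzero vectors of $GF(2)^3$ other than $(1,1,1)$, that is, a point-deletion of $F_7$, which represents $M(K_4)$. Hence $M_H$ has an $M(K_4)$-minor, so it is not a binary gammoid and $M\in\mathcal{G}_2$.

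For the ``only if'' direction, first note that $\mathcal{G}_1=\emptyset$: splitting along a single element $e$ just turns $e$ into a coloop, giving $(M\setminus e)\oplus U_{1,1}$, which is a binary gammoid because $M\setminus e$ is one. Hence clause~(i) of Theorem~\ref{mt1} with $k=2$ is vacuous, and from $M\in\mathcal{G}_2$ we obtain a minor $P$ of $M$ that is $M(Q_i)$ or a coextension of $M(Q_i)$ by at most two elements for some $i\in\{2,3,4\}$, together with --- by the constructions in Lemmas~\ref{minlem} and~\ref{rel_min_quo} --- a two-element set $H\subseteq E(P)$ and an $H'\subseteq H$ with $P_H\cong M(K_4)$ or $P_H/H'\cong M(K_4)$. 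A direct inspection shows that for every two-element $H$ the matroid $(M(Q_i))_H$ still contains a loop or a parallel pair, so $P_H\not\cong M(K_4)$; and a rank count (the split being nontrivial) then forces $H'\neq\emptyset$. So $P/H'\cong M(Q_i)$ with $1\le|H'|\le2$, i.e. $P$ is a \emph{proper} coextension of some $M(Q_i)$ by one or two elements; and since $P$ is a minor of the binary gammoid $M$ whose two-element splitting at $H$ has an $M(K_4)$-minor, $P$ itself lies in $\mathcal{G}_2$.

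It remains to show that every such $P$ has $M(G_1)$ as a minor. By Theorem~\ref{gammoid} $P$ is graphic, say $P\cong M(G)$ where $G$ is obtained from $Q_i$ by splitting one or two of its vertices and joining each split pair by a new edge; thus $G$ has at most five vertices and eight edges, so there are only finitely many candidates. For each $G$ I would discard those with $M(G)$ not a binary gammoid (by Theorem~\ref{gammoid}, the ones keeping a $K_4$-minor in their simplification) and those with $M(G)\notin\mathcal{G}_2$ (the ones admitting no two-element splitting with an $M(K_4)$-minor), and then verify that every survivor contains $G_1$ as a minor --- in the minimal situation, where $P$ is a coextension of $M(Q_2)$ by one edge, one checks $G\cong G_1$ outright, using that contracting that edge returns $Q_2$, mirroring the identity $(M(G_1))/\ell\cong M(Q_2)$ for the edge $\ell$ of $G_1$ parallel to $x$. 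This gives $M(G_1)\preceq P\preceq M$. I expect this last case analysis to be the main obstacle: it is not enough to know $P$ is \emph{some} coextension of an $M(Q_i)$, since some such coextensions are binary gammoids that nonetheless avoid $\mathcal{G}_2$ --- for instance the coextension of $M(Q_4)$ along one edge giving a $4$-cycle with three of its four sides doubled is a binary gammoid that lies in $\mathcal{G}_3$ but not in $\mathcal{G}_2$ --- so the delicate point is to settle, for each vertex-split of $Q_2,Q_3,Q_4$, both gammoidness and membership in $\mathcal{G}_2$, and to see that the conjunction of the two is exactly ``contains $G_1$ as a minor''.
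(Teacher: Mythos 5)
Your proposal is correct and follows essentially the same route as the paper's proof: the ``if'' direction by exhibiting an $M(K_4)$-minor of $(M(G_1))_{\{x,y\}}$ (the paper asserts this without computation; your contraction of $x$ is exactly the verification), and the ``only if'' direction by invoking Theorem~\ref{mt1} with $k=2$, observing $\mathcal{G}_1=\emptyset$, and then running a finite case analysis over the coextensions of $M(Q_2),M(Q_3),M(Q_4)$ by at most two elements, with the one-element coextension of $Q_2$ yielding $G_1$ itself (the graph $a_1$ in the paper's Figure~\ref{extension}). The only divergence is cosmetic: where you prune candidates by testing gammoidness and membership in $\mathcal{G}_2$ directly, the paper prunes via the observation that the relevant graphs cannot contain a two-edge cut, but both filters serve the same purpose in the same enumeration.
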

\begin{proof}
	Let a binary gammoid $M$ contain minor $M(G_1)$. Then it is easy to prove that $M_{x,y}$ is not a binary gammoid, where $x,y$ are shown in Figure \ref{G2fig}.
	
	Conversely, let $M$ do not contain minor isomorphic to $M(G_1)$, then we prove that $M \notin \mathcal{G}_2$. Suppose, $M \in \mathcal{G}_2$ , that is $M_T$ is not a binary gammoid for some $T=\{x,y\} \subseteq E(M) $, then by Lemma \ref{gammoid}, $M_T$ contains minor $M(K_4)$. Then by Lemma \ref{minlem}, there is minor $P$ which is isomorphic to the one element extension of some minimal minor in  $\mathcal{G}_{k-1}$ or $P=M(Q_i)$ or $P$ is an extension of $M(Q_i)$ by one or two elements, where the graph $Q_i$ is as shown in Figure \ref{qk4fig}, for $i=2,3,4$. It is observed that the class $\mathcal{G}_1$ is empty. Thus $P \cong M(Q_i)$ or extension of $M(Q_i)$ not more than k elements, for $i=2,3,4$. By Theorem \ref{gammoid}, $P$ is a graphic gammoid thus $P\cong M(G)$. Let $G$ contains edge cut of two cardinality, say $\{a,b\}$. Then it is cocircuit of $P$ and $\{a,b\}$ contains cocircuit of $Q_i\backslash a$ for some $i$, which is contradiction. Hence $G$ does not contain edge cut containing two elements. \\
	Case-(i) If $P \cong M(Q_1)$, a contradiction, as $P$ is a minor of a binary gammoid. Hence we discard $Q_1$.
Case-(ii) If $P \cong M(Q_2)$ then $M(Q_2)_T$ will contain a loop or a pair of parallel arcs for any two-element set $T$. Hence $M(Q_2)_T \ncong M(K_4)$, thus graph $G$ is obtained by taking coextension of $Q_2$ by one or two elements such that it does not contain edge cut of two cardinalities. Extensions of $Q_2$ are given in the Figure \ref{extension}. Note that, $a_1 \cong G_1$. Hence we discard $a_1$. splitting $a_2$ and $a_3$ using two elements does not give $M(K_4)$; hence we take coextensions of $a_2$ and $a_3$ by one element such that it does not contain a cut set of two cardinalities.  

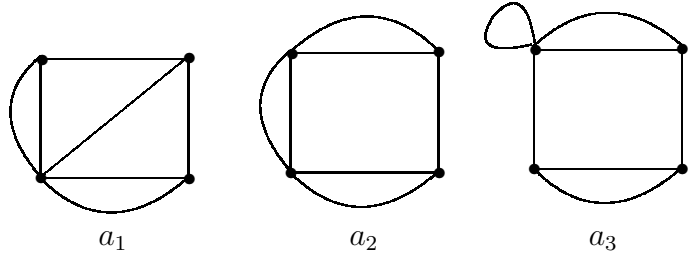
\begin{figure}[h!]
\centering
\unitlength 1mm 
\linethickness{0.4pt}
\ifx\plotpoint\undefined\newsavebox{\plotpoint}\fi 
\begin{picture}(94.435,35.75)(0,0)
	\put(28.81,9.25){\circle*{1.5}}
	\put(9.31,9.281){\circle*{1.5}}
	\put(9.158,9.299){\line(1,0){19.534}}
	\put(9.473,25){\circle*{1.5}}
	\put(28.79,25.25){\circle*{1.5}}
	\put(9.358,25.102){\line(1,0){19.325}}
	\put(28.683,25.102){\line(0,-1){15.608}}
	\put(9.21,9.196){\line(0,1){16.352}}
	\qbezier(9,9.5)(18.125,-0)(28.75,9.5)
	\qbezier(9.25,25.5)(1.375,18.875)(9,9.75)
	\multiput(28.75,25.5)(-.0409751037,-.0337136929){482}{\line(-1,0){.0409751037}}
	\put(61.685,10){\circle*{1.5}}
	\put(42.185,10.031){\circle*{1.5}}
	\put(42.033,10.049){\line(1,0){19.534}}
	\put(42.348,25.75){\circle*{1.5}}
	\put(61.665,26){\circle*{1.5}}
	\put(42.233,25.852){\line(1,0){19.325}}
	\put(61.558,25.852){\line(0,-1){15.608}}
	\put(42.085,9.946){\line(0,1){16.352}}
	\qbezier(41.875,10.25)(51,.75)(61.625,10.25)
	\qbezier(42.125,26.25)(34.25,19.625)(41.875,10.5)
	\put(93.685,10.5){\circle*{1.5}}
	\put(74.185,10.531){\circle*{1.5}}
	\put(74.033,10.549){\line(1,0){19.534}}
	\put(74.348,26.25){\circle*{1.5}}
	\put(93.665,26.5){\circle*{1.5}}
	\put(74.233,26.352){\line(1,0){19.325}}
	\put(93.558,26.352){\line(0,-1){15.608}}
	\put(74.085,10.446){\line(0,1){16.352}}
	\qbezier(73.875,10.75)(83,1.25)(93.625,10.75)
	\qbezier(42,26)(52.25,35.375)(61.5,26.25)
	\qbezier(74,26.75)(83.625,35.75)(93.75,26.75)
	\qbezier(73.75,27)(65.25,25.125)(68.75,30.75)
	\qbezier(68.75,30.75)(73,35.625)(74.25,27)
	\put(18.75,1){\makebox(0,0)[cc]{$a_1$}}
	\put(51.75,1){\makebox(0,0)[cc]{$a_2$}}
	\put(83.25,1){\makebox(0,0)[cc]{$a_3$}}
\end{picture}
\caption{Coextensions of $Q_2$ by one element}
\label{extension}
\end{figure}
Coextensions of $a_1$ are the graphs $b_1$ and $b_2$ and coextension of $a_3$ is $b_3$, where $b_1, b_2, b_3$ are given in Figure \ref{extension2}. Coextensions $b_1$, $b_2$ and $b_3$ contains minor $G_1$, hence we discard $b_1, b_2, b_3$ and hence $Q_2$.

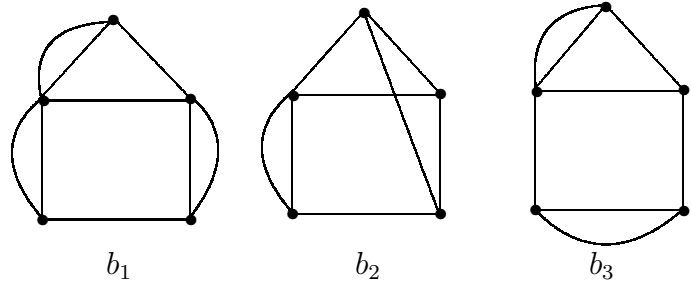
\begin{figure}[h!]
\centering
\unitlength 1mm 
\linethickness{0.4pt}
\ifx\plotpoint\undefined\newsavebox{\plotpoint}\fi 
\begin{picture}(99.06,36)(0,0)
	\put(33.435,7){\circle*{1.5}}
	\put(13.935,7.031){\circle*{1.5}}
	\put(13.783,7.049){\line(1,0){19.534}}
	\put(14.098,22.75){\circle*{1.5}}
	\put(33.415,23){\circle*{1.5}}
	\put(13.983,22.852){\line(1,0){19.325}}
	\put(33.308,22.852){\line(0,-1){15.608}}
	\put(13.835,6.946){\line(0,1){16.352}}
	\qbezier(13.875,23.25)(6,16.625)(13.625,7.5)
	\put(66.31,7.75){\circle*{1.5}}
	\put(46.81,7.781){\circle*{1.5}}
	\put(46.658,7.799){\line(1,0){19.534}}
	\put(46.973,23.5){\circle*{1.5}}
	\put(66.29,23.75){\circle*{1.5}}
	\put(46.858,23.602){\line(1,0){19.325}}
	\put(66.183,23.602){\line(0,-1){15.608}}
	\put(46.71,7.696){\line(0,1){16.352}}
	\qbezier(46.75,24)(38.875,17.375)(46.5,8.25)
	\put(98.31,8.25){\circle*{1.5}}
	\put(78.81,8.281){\circle*{1.5}}
	\put(78.658,8.299){\line(1,0){19.534}}
	\put(78.973,24){\circle*{1.5}}
	\put(98.29,24.25){\circle*{1.5}}
	\put(78.858,24.102){\line(1,0){19.325}}
	\put(98.183,24.102){\line(0,-1){15.608}}
	\put(78.71,8.196){\line(0,1){16.352}}
	\qbezier(78.5,8.5)(87.625,-1)(98.25,8.5)
	\put(23.25,33.5){\circle*{1.5}}
	\put(56.25,34.5){\circle*{1.5}}
	\put(88,35.25){\circle*{1.5}}
	\multiput(13.5,23)(.0337370242,.0371972318){289}{\line(0,1){.0371972318}}
	\multiput(23.25,33.75)(.0336700337,-.0353535354){297}{\line(0,-1){.0353535354}}
	\multiput(46.75,24)(.0336879433,.0381205674){282}{\line(0,1){.0381205674}}
	\multiput(56.25,34.75)(.0337370242,-.0371972318){289}{\line(0,-1){.0371972318}}
	\multiput(78.75,24.5)(.0336363636,.04){275}{\line(0,1){.04}}
	\multiput(88,35.5)(.0336700337,-.037037037){297}{\line(0,-1){.037037037}}
	\qbezier(13.75,23)(11.625,32.875)(23,33.25)
	\qbezier(33.25,23.5)(40.75,17.125)(33.25,7.25)
	\qbezier(78.75,24.75)(77.625,34.875)(88,35.5)
	\multiput(56,35)(.0336700337,-.0909090909){297}{\line(0,-1){.0909090909}}
	\put(24,1){\makebox(0,0)[cc]{$b_1$}}
	\put(56.75,1){\makebox(0,0)[cc]{$b_2$}}
	\put(87.5,1){\makebox(0,0)[cc]{$b_3$}}
\end{picture}

\caption{Coextensions of $a_1$ and $a_2$}
\label{extension2}
\end{figure}   
Case(iii) If $P \cong M(Q_3)$ or $P \cong M(Q_4)$ then using similar argument made above we can discard $Q_3$ and $Q_4$.  

Thus, from all the cases discuss above, we conclude that $M_T$ is a binary gammoid for any two element set $T \subseteq E(M)$, that is $M \notin \mathcal{G}_2$.
\end{proof}

\noindent We now find the minimal minors for the class $\mathcal{G}_3$. We state the theorem again.
\begin{thm}
	Let a binary gammoid be $M$ then $M \in \mathcal{G}_3$ if and only if $M$ contain $M(G_i)$ minor, where Figure \ref{G3}, shows graph $G_i$, for $i=2,3,4$.
\end{thm}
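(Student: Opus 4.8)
The plan is to imitate the proof of Theorem \ref{G2}, built on Lemmas \ref{minlem}, \ref{rel_min_quo} and \ref{qk4}, together with the observation that the graphs $G_2,G_3,G_4$ are, as matroids, exactly the nontrivial graphic quotients $M(Q_2),M(Q_3),M(Q_4)$ of $M(K_4)$ furnished by Lemma \ref{qk4}, each equipped with a distinguished three-element set $\{x,y,z\}$.

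For the forward implication I would argue directly. Suppose $M$ has a minor isomorphic to some $M(G_i)$, $i\in\{2,3,4\}$, and let $H=\{x,y,z\}$ be the marked triple of $G_i$ from Figure \ref{G3}. Writing out a matrix representing the splitting $M(G_i)_H$ over $\mathbb{F}_2$, one checks in each of the three cases that $M(G_i)_H$ has an $M(K_4)$-minor; in fact one recognises $M(G_i)_H$ as the Fano matroid with one point deleted, that is, $M(K_4)$ itself. Since binary gammoids are minor-closed (Theorem \ref{gammoid}) and $H$ is disjoint from the elements used to extract the minor $M(G_i)$ from $M$, it follows that $M_H$ has an $M(K_4)$-minor, hence $M_H$ is not a binary gammoid and $M\in\mathcal{G}_3$. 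This part is routine once the three small matrices are written down.

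For the reverse implication I would prove the contrapositive. Assume $M$ has none of $M(G_2),M(G_3),M(G_4)$ as a minor and suppose, for contradiction, that $M\in\mathcal{G}_3$. By Theorem \ref{gammoid}, $M_H$ has an $M(K_4)$-minor for some $H\subseteq E(M)$ with $|H|=3$, so Lemma \ref{minlem} yields a minor $P$ of $M$ with $H\subseteq E(P)$, which is a binary gammoid (hence graphic) and satisfies one of the three alternatives of that lemma. If $P$ is a one-element extension of a minimal minor of $\mathcal{G}_2$, then, the unique such minimal minor being $M(G_1)$ by Theorem \ref{G2}, the matroid $M(G_1)$ is a minor of $P$ and hence of $M$; since $M(G_1)\in\mathcal{G}_2$, Corollary \ref{maincor} applied to $M(G_1)$ forces $M(G_1)$, and therefore $M$, to have a minor isomorphic to some $M(Q_j)\cong M(G_j)$, $j\in\{2,3,4\}$, contradicting our assumption. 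Otherwise $P_H\cong M(K_4)$, or $P_H/H'\cong M(K_4)$ for some $H'\subseteq H$, so Lemma \ref{rel_min_quo} provides a matroid $Q$ with $a\in E(Q)$, $Q\backslash a\cong M(K_4)$, such that $Q/a$ is a minor of $P$; being a minor of the graphic matroid $P$, $Q/a$ is graphic, so $Q/a\cong M(Q_j)$ for some $j\in\{1,2,3,4\}$ by Lemma \ref{qk4}. If $j=1$ then $M(K_4)=M(Q_1)$ is a minor of the binary gammoid $M$, which is impossible; if $j\in\{2,3,4\}$ then $M$ has $M(Q_j)\cong M(G_j)$ as a minor, again a contradiction. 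Since every case is impossible, $M\notin\mathcal{G}_3$.

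The step needing the most care is the case of Lemma \ref{minlem} in which $P$ is an extension of a minimal minor of $\mathcal{G}_2$: rather than analysing the extension, the right move is simply to note that $M$ then contains $M(G_1)$, which already carries one of the sought forbidden minors by Corollary \ref{maincor}. This is exactly why, unlike in the proof of Theorem \ref{G2}, no enumeration of one- and two-element coextensions of the graphs $Q_i$ is required here — the forbidden minors for $\mathcal{G}_3$ are the $Q_i$ themselves. Combined with Theorem \ref{mt1}, the same reasoning moreover gives $\mathcal{G}_k\subseteq\mathcal{G}_3$ for every $k\ge 2$, so that the theorem in fact characterizes the binary gammoids admitting some splitting that is not a binary gammoid.
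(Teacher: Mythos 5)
Your proposal is correct and follows essentially the same route as the paper: the forward direction by exhibiting the binary representation of $M(G_i)_H$ as $F_7$ minus a point, i.e.\ $M(K_4)$, and the converse via Lemma \ref{minlem}, Lemma \ref{rel_min_quo} and Lemma \ref{qk4} together with the identification $M(Q_i)\cong M(G_i)$ for $i=2,3,4$. The one substantive difference is in your favour: the paper's converse silently omits alternative (iii) of Lemma \ref{minlem} (where $P$ is a one-element extension of a minimal minor of $\mathcal{G}_2$), whereas you close that case by noting that $M$ then has an $M(G_1)$-minor and invoking Corollary \ref{maincor} (equivalently, contracting a suitable edge of $G_1$ yields $Q_2$ directly), so your argument is actually the more complete of the two.
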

\begin{proof}
Let a binary gammoid $M$ contains minor $M(G_i)$, where $G_i$ is shown in Figure \ref{G3}, for $i=2,3,4$. Then we prove that $M \in \mathcal{G}_3$. \\
Let $H=\{x,y,z\}$ be as shown in graph $G_2$ in Figure \ref{G3} and $A_2$ be the  matrix representing the matroid $M(G_2)$ as given below.
	$ A_2 =\left[ \begin{array}{cccccc}
	x & y & z & & &  \\
	1 & 0 & 0 &1&0&1\\
	0 & 1 & 0 &0&1&1  \end{array} \right].$
 Then $ (A_2)_H =\left[ \begin{array}{cccccc}
x & y & z & & &  \\
1 & 0 & 0 &1&0&1\\
0 & 1 & 0 &0&1&1 \\
1 & 1 & 1 &0&0&0  \end{array} \right].$

It is observed that vector matroid $M(G_2)_H$ of the matrix $(A_2)_H$ is isomorphic to $M(K_4)$. Thus by Theorem \ref{gammoid}, $M(G_2)_H$ is not a binary gammoid. Similarly, we prove that, if $M$ contain minors $M(G_3)$ and $M(G_4)$ then $M_H$ is not a binary gammoid. Hence $M \in \mathcal{G}_3$.

Conversely, if $M$ does not contain any of $M(G_i)$ minor, for $i=2,3,4$. Then we prove that $M_H$ is a binary gammoid for any $H \subseteq E(M)$, with $|H|=3$. Suppose, $M_H$ is not a binary gammoid for some subset $H$ of $E(M)$. Then $M_H$ has a minor $M(K_4)$. Thus by lemma \ref{minlem}, there exists a minor $P$ of $M$, such that $P_H\cong M(K_4)$ or $P_H /H' \cong M(K_4)$ for some subset $H'$ of $H$. By Lemma \ref{rel_min_quo}, there is a binary gammoid $N$ with $a \in E(N)$, such that $N\backslash a \cong M(K_4)$ and $P \cong N/a$ or $P$ is extension of $N/a$ not more than three elements. By Lemma \ref{qk4}, $N/a \cong M(Q_i)$ for $i=1,2,3,4$. We discard the case of $M(Q_1)$ as it is not a binary gammoid. Note that, for $i=2,3,4$, $M(Q_i)\cong M(G_i)$, that is $M$ contains minor $M(G_i)$ for $i=2,3,4$, a contradiction. Hence $M_H$ is a binary gammoid for any set $H$ containing three elements. Hence proof.

\end{proof}

\section{Single Element Coextension of Gammoid}
From the definition of element splitting for a binary matroid $M$, it is clear that the single element coextension of $M$ is the element splitting matroid $M_H'$ for some $H \subseteq E(M)$. We have characterized a binary gammoid whose single element coextension is a binary gammoid. We state the theorem again.
\begin{thm}\label{mtelsp}
	Let $M$ be a binary gammoid then element splitting matroid $M_H'$ is a binary gammoid, for $H\subseteq E(M)$, with $|H|\geq 3$ if and only if $M$ do not contain $M(G_6)$ minor, where $G_6$ is as shown in Figure \ref{elementsp}.
\end{thm}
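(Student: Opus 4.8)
The plan is to follow the pattern of the splitting arguments above. Since $M_H'$ is a binary matroid, Theorem \ref{gammoid} tells us that it is a binary gammoid precisely when it has no $M(K_4)$ minor, so the statement reduces to: for every $H$ with $|H|\geq 3$, $M_H'$ has no $M(K_4)$ minor if and only if $M$ has no $M(G_6)$ minor. I would also keep in hand the two identities $P_H'\backslash a\cong P_H$ and $P_H'/a\cong P$ from the proof of Lemma \ref{rel_min_quo}, together with the observation that the adjoined element $a$ and the adjoined row have support inside $H\cup\{a\}$, so deletion or contraction of elements of $E(M)\setminus H$ commutes with the element-splitting operation, i.e. $(M\backslash X/Y)_H'\cong M_H'\backslash X/Y$ whenever $X,Y\subseteq E(M)\setminus H$.

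For the direction ``$M$ contains $M(G_6)\Rightarrow$ some element split fails'', I would first fix a suitable three-element subset $H_0$ of $E(M(G_6))$ and check by a direct matrix computation (in the style of the matrices $A_2,(A_2)_H$ above) that $M(G_6)_{H_0}'\cong M(K_4)$; concretely one verifies that the resulting rank-$3$ binary matroid on six elements is simple and has four triangles, hence is $M(K_4)$. Then if $P\cong M(G_6)$ is a minor of $M$, write $P\cong M\backslash X/Y$ with $X,Y$ disjoint from $H_0$; by the commutation remark, $M_{H_0}'\backslash X/Y\cong P_{H_0}'\cong M(K_4)$, so $M_{H_0}'$ is not a binary gammoid and $|H_0|=3$, as required.

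For the converse, suppose $M$ has no $M(G_6)$ minor but $M_H'$ has an $M(K_4)$ minor for some $H$ with $|H|\geq 3$. Mimicking Lemma \ref{minlem}, I would push the deletions and contractions disjoint from $H\cup\{a\}$ into $M$, obtaining a minor $P$ of $M$ with $H\subseteq E(P)$ and sets $A_1,A_2\subseteq H\cup\{a\}$ with $P_H'\backslash A_1/A_2\cong M(K_4)$. Now split on the role of $a$. If $a\in A_2$, contracting $a$ first gives $P\backslash A_1/(A_2\setminus a)\cong M(K_4)$, contradicting that the binary gammoid $M$ (hence its minor $P$) has no $M(K_4)$ minor. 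If $a\in A_1$, deleting $a$ first gives $P_H\backslash(A_1\setminus a)/A_2\cong M(K_4)$ with $A_1\setminus a,A_2\subseteq H$, so the splitting matroid $P_H$ is not a binary gammoid; by Corollary \ref{maincor}, $P$ — hence $M$ — has an $M(Q_i)$ minor for some $i\in\{2,3,4\}$, and since each $M(Q_i)$ has $M(G_6)$ as a minor (delete the loop in $Q_2$ or $Q_3$, delete one element of a parallel pair in $Q_4$), $M$ has an $M(G_6)$ minor, a contradiction. Finally, if $a\notin A_1\cup A_2$, then $a$ survives as an element of $M(K_4)$ and $(P_H'\backslash A_1/A_2)/a\cong M(K_4)/a$; contracting any edge of $K_4$ produces a triangle with two doubled edges, so $M(K_4)/a\cong M(G_6)$, while $(P_H'\backslash A_1/A_2)/a=(P_H'/a)\backslash A_1/A_2=P\backslash A_1/A_2$, giving again an $M(G_6)$ minor of $M$, a contradiction. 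Hence no bad $H$ exists, and $M_H'$ is a binary gammoid for every $H$ with $|H|\geq 3$.

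The routine parts are the three minor computations: $M(G_6)_{H_0}'\cong M(K_4)$, $M(K_4)/a\cong M(G_6)$, and $M(G_6)\leq M(Q_i)$ for $i=2,3,4$. The step needing the most care is the reduction to $P$ and the bookkeeping of which deleted or contracted elements lie in $H\cup\{a\}$; in particular the case $a\notin A_1\cup A_2$ has no counterpart in the pure-splitting arguments, and the whole proof hinges on noticing that it is dispatched by the single identity $M(K_4)/a\cong M(G_6)$.
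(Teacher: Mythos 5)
Your proposal is correct and follows essentially the same route as the paper: both reduce via Theorem \ref{gammoid} to the presence of an $M(K_4)$ minor in $M_H'$ and then split into the same three cases according to whether the adjoined element $a$ is deleted, contracted, or untouched, using Corollary \ref{maincor} together with $M(G_6)\leq M(Q_i)$ in the deletion case and the identity $M(K_4)/a\cong M(G_6)$ in the untouched case. The only differences are presentational (you first push the deletions and contractions outside $H\cup\{a\}$ into a minor $P$, and you make the forward-direction matrix check explicit, which the paper leaves as ``straightforward'').
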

\begin{figure}[h]
\unitlength 1mm 
\linethickness{0.4pt}
\ifx\plotpoint\undefined\newsavebox{\plotpoint}\fi 
\begin{picture}(28.264,24.188)(0,0)
	\put(26.81,8.438){\circle*{1.5}}
	\put(16.81,23.438){\circle*{1.5}}
	\put(7.31,8.469){\circle*{1.5}}
	\put(7.158,8.487){\line(1,0){19.534}}
	\multiput(16.595,23.588)(.0336731392,-.0485889968){309}{\line(0,-1){.0485889968}}
	\multiput(16.743,23.291)(-.0336655052,-.0512787456){287}{\line(0,-1){.0512787456}}
	\qbezier(16.892,23.44)(28.264,21.135)(26.852,8.426)
	\qbezier(7.23,8.574)(5,20.764)(16.446,23.44)
	\put(17.25,3.25){\makebox(0,0)[cc]{$G_6$}}
	\put(6.25,19.25){\makebox(0,0)[cc]{$x$}}
	\put(28.25,19){\makebox(0,0)[cc]{$y$}}
	\put(17.25,10.25){\makebox(0,0)[cc]{$z$}}
\end{picture}
\caption{Forbidden minor for single element coextension of a binary gammoid}
\label{elementsp}
\end{figure}
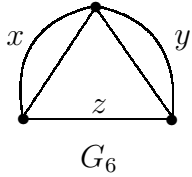
\begin{proof}
Let $M$ be a binary gammoid such that it contains a minor $M(G_6)$, then it is straightforward to prove that $M_H'$ is not a binary gammoid for $H=\{x,y,z\}$, where $x,y,z$ are shown in Figure \ref{elementsp}.\\
Conversely, let $M$ do not contain $M(G_6)$ minor then we need to prove that element splitting $M_H'$ is a binary gammoid. Suppose that $M_H'$ is not a binary gammoid, then by Theorem \ref{gammoid}, $M_H'$ contains a minor $M(K_4)$. Thus for some $H_1$ and $H_2$ in $E(M)$, $M_H'\backslash H_1 /H_2 \cong M(K_4)$. There are following cases.\\
Case-(i). If $a \notin H_1 \cup H_2$ then $M_H'\backslash H_1 /H_2/a \cong M(K_4)/a$ thus we have $M_H'/a \backslash H_1 /H_2 \cong M(G_6)$, but, $M_H'/a=M$ which gives that $M \backslash H_1 /H_2 \cong M(G_6)$, which is a contradiction. \\
Case-(ii). If $a \in H_1$ then, $M_H'\backslash a \backslash H_1-{a} /H_2 \cong M(K_4)$, but we have $M_H'\backslash a = M_H$. Thus we say that $M(K_4)$ is a minor of splitting matroid $M_H$. Then by Corollary \ref{maincor}, $M$ contains minor $M(Q_i)$, for $i=2,3,4$ and each $M(Q_i)$ contains a minor $M(G_6)$, which is a contradiction.\\
Case-(iii). If $a\in H_2$ then $M_H'/a \backslash H_1 /H_2-{a} \cong M \backslash H_1 /H_2-{a} \cong M(K_4)$, a contradiction. Thus from above cases we say that $M_H'$ is a binary gammoid.  
\end{proof}

\section{Es-splitting of a Binary Gammoid}
In this section, we characterize a binary gammoid whose es-splitting is a binary gammoid. We state the theorem below.
\begin{thm}
	Let $M$ be a binary gammoid then $M_H^e$ is a binary gammoid, for $H \subseteq E(M)$, with $|H|\geq 2$ and $e \in H$ if and only if $M$ do not contain a minor $M(G_7)$, where $G_7$ is as shown in Figure \ref{esgammoid}.
\end{thm}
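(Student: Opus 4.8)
\noindent The strategy is to imitate the proof of Theorem~\ref{mtelsp}. An es-splitting of a binary matroid is again binary, so by Theorem~\ref{gammoid} it suffices to decide when $M_H^e$ has a minor isomorphic to $M(K_4)$. For the necessity direction I would argue the contrapositive: assuming $M$ has a minor isomorphic to $M(G_7)$, take the two marked edges $x,y$ of $G_7$, fix a standard binary representation of $M(G_7)$ in which $e=x$ occurs as a row, and carry out the es-splitting with $H=\{x,y\}$; a direct matrix computation then shows that $M(G_7)_{\{x,y\}}^{e}$ already contains an $M(K_4)$-minor. (This is exactly the point where es-splitting departs from ordinary element splitting, whose corresponding matroid is only $M(G_6)$, a gammoid.) Then, as in Lemma~\ref{minlem}, es-splitting commutes with the deletions and contractions realising the $M(G_7)$-minor --- all of them disjoint from $H$ and from the new coextension element --- so $M_H^{e}$ has $M(G_7)_{\{x,y\}}^{e}$, and hence $M(K_4)$, as a minor and is not a binary gammoid for this $H$.

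\noindent For sufficiency, suppose $M$ has no $M(G_7)$-minor but, for contradiction, that $M_H^{e}$ is not a binary gammoid for some admissible $H$, so $M_H^{e}\backslash H_1/H_2\cong M(K_4)$ for disjoint $H_1,H_2$. Let $a$ be the coextension element added by the element-splitting step. I would first prove the es-splitting analogue of Lemma~\ref{rel_min_quo}: deleting $a$ returns a splitting matroid $M_H$ of $M$ along a set of size at least $2$, and contracting $a$ returns $M$ (up to one parallel element arising from the row $\gamma$, which at worst shrinks a later $M(G_6)$-minor to an $M(G_7)$-minor). Then I split on the fate of $a$. If $a\in H_1$, then $M_H\cong M_H^{e}\backslash a$ has an $M(K_4)$-minor, so $M_H$ is not a binary gammoid and Corollary~\ref{maincor} produces an $M(Q_i)$-minor of $M$ for some $i\in\{2,3,4\}$; each $M(Q_i)$ has $M(G_6)$, hence $M(G_7)$, as a minor, a contradiction. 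If $a\in H_2$, then $M(K_4)\cong M_H^{e}/a\backslash H_1/(H_2\setminus a)\cong M\backslash H_1/(H_2\setminus a)$, so $M$ has an $M(K_4)$-minor and therefore an $M(G_7)$-minor, a contradiction. If $a$ survives into $M(K_4)$, then $M(K_4)/a\cong M(G_6)$ since contracting any edge of the edge-transitive graph $K_4$ gives $G_6$, and commuting the contraction of $a$ past the deletions yields that $M$ has an $M(G_6)$- and hence an $M(G_7)$-minor, a contradiction. As all three cases are impossible, $M_H^{e}$ has no $M(K_4)$-minor, i.e.\ it is a binary gammoid.

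\noindent The step I expect to be the main obstacle is the es-splitting analogue of Lemma~\ref{rel_min_quo}, namely identifying $M_H^{e}\backslash a$ and $M_H^{e}/a$ precisely: the extra row $\gamma$ tied to $e\in H$ is what makes the \emph{proper} minor $M(G_7)$ of the element-splitting obstruction $M(G_6)$ already obstruct es-splittability, so these relations must be set up carefully enough to see it, and the $\gamma$-induced parallel element must be tracked through the minor bookkeeping. The matching delicate point on the necessity side is the hand computation that $M(G_7)_{\{x,y\}}^{e}$ really contains $M(K_4)$ and not merely the gammoid $M(G_6)$.
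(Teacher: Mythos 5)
Your proposal is correct in substance but takes a genuinely different route from the paper. The paper's proof is a short reduction: by definition $M_H^e=M(F)_H'$, where $F$ is the graph of $M$ with one edge added parallel to $e$; since $M(F)$ is still a binary gammoid, Theorem \ref{mtelsp} applies directly to it, so if $M_H^e$ is not a binary gammoid then $M(F)$ has an $M(G_6)$-minor, and removing the added parallel edge turns that into an $M(G_7)$-minor of $M$ (the necessity direction is left as a one-line computation). You instead re-run the entire three-case analysis from the proof of Theorem \ref{mtelsp} at the level of $M_H^e$ itself, re-deriving the analogue of Lemma \ref{rel_min_quo} and invoking Corollary \ref{maincor}; your cases do close up (in particular $M(K_4)/a\cong M(G_6)$ is correct, and your explicit verification that $M(G_7)_{\{x,y\}}^{x}\cong F_7\backslash p\cong M(K_4)$ supplies a detail the paper omits). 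What the paper's route buys is brevity --- it reuses Theorem \ref{mtelsp} wholesale rather than reproving it; what your route buys is self-containedness and an explicit necessity computation. One imprecision to fix: you write $M_H^e\backslash a\cong M_H$ and $M_H^e/a\cong M$, but in fact $M_H^e\backslash a\cong (M')_H$ and $M_H^e/a\cong M'$, where $M'$ is $M$ with the parallel element $\gamma$ adjoined; so Corollary \ref{maincor} must be applied to $M'$ (still a binary gammoid, since $M(Q_i)$, $M(K_4)$ and $U_{2,4}$ are connected/simple enough for minors to transfer), and the resulting $M(Q_i)$- or $M(G_6)$-minor of $M'$ must then be converted to an $M(G_7)$-minor of $M$ by the parallel-element argument you sketch --- if the minor uses both $e$ and $\gamma$ they must form the doubled pair, and deleting $\gamma$ leaves $M(G_7)$. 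You flag exactly this bookkeeping as the delicate point, so the gap is acknowledged rather than overlooked, but as written the displayed isomorphisms are off by the element $\gamma$.
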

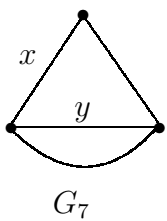
\begin{figure}[h]
\centering
\unitlength 1mm 
\linethickness{0.4pt}
\ifx\plotpoint\undefined\newsavebox{\plotpoint}\fi 
\begin{picture}(27.81,26.688)(0,0)
	\put(27.06,10.938){\circle*{1.5}}
	\put(17.06,25.938){\circle*{1.5}}
	\put(7.56,10.969){\circle*{1.5}}
	\put(7.408,10.987){\line(1,0){19.534}}
	\multiput(16.845,26.088)(.0336731392,-.0485889968){309}{\line(0,-1){.0485889968}}
	\multiput(16.993,25.791)(-.0336655052,-.0512787456){287}{\line(0,-1){.0512787456}}
	\put(15.5,.75){\makebox(0,0)[cc]{$G_7$}}
	\qbezier(26.75,11)(17.5,.5)(7.25,11)
	\put(9.75,20.25){\makebox(0,0)[cc]{$x$}}
	\put(17,13){\makebox(0,0)[cc]{$y$}}
\end{picture}

\caption{Forbidden minor for es-splitting of a binary gammoid}
\label{esgammoid}
\end{figure}
\begin{proof}
Let $M$ be a binary gammoid such that it contain $M(G_7)$ minor then it is straightforward to prove that $M_H^e$ is not a gammoid, for $H= \{x,y\}$ and $e \in H$.

Conversely, let the binary gammoid $M$ not contain $M(G_7)$ minor. Let $G$ be a graph corresponding to $M$, and $F$ be the graph obtained from $G$ by adding one parallel edge. Suppose that $M_H^e$ is not a binary gammoid. From the definition of es-splitting operation, it is clear that $M(F)_H'= M_H^e$ then $M(F)_H'$ is also not a binary gammoid. Thus, by Theorem \ref{mtelsp}, $F$ contain minor $M(G_6)$. Hence $M$ contains minor $M(G_7)$, a contradiction. Hence $M_H^e$ is a binary gammoid.
\end{proof}
\bibliographystyle{amsplain}

\begin{thebibliography}{10}
	\bibitem{azt} Azadi G., {\it Generalized splitting operation for
		binary matroids and related results}, Ph. D. Thesis, University of
	Pune (2001).
	\bibitem{azn} Azanchilar H. , Extension of line splitting operation from graphs to binary matroid, {\it Lobachevskii J. Math. 24} (2006), 3-12. 
	\bibitem{ymb_Gammoid} Borse Y. M., Forbidden-minors for splitting binary gammoid. {\it Austaralian Journal of Combinatorics} Vol. 46(2010), 307-314.
	\bibitem{Har} F. Harary, {\it Graph Theory, Narosa Publishing House, New Delhi} , 1988.
	\bibitem{gm} Ganesh Mundhe, {\it On Connectivity and Graphicness of Binary Matroids Under Splitting Operation}, Ph.D. Thesis, University of Pune, (2018).
	\bibitem{fl} H. Fleischner, {\it Eulerian Graphs and Related Topics Part 1, Vol. 1, North Holland, Amsterdam }, 1990.
		\bibitem{mms} M. M. Shikare and B. N. Waphare, Excluded-Minors for the class of graphic splitting matroids, {\it Ars
		Combin.}  97 (2010), 111-127.
	\bibitem{mms1}M. M. Shikare, Gh. Azadi, B. N. Waphare, Generalized splitting operation and its application, {\it J. Indian
		Math. Soc. }, 78, (2011), 145-154.

	\bibitem{ttr} T. T. Raghunathan, M. M. Shikare and B. N. Waphare, Splitting in a binary matroid, {\it Discrete Math. }
	184 (1998), 267-271.
	\bibitem{ox}J. G. Oxley, {\it Matroid Theory}, Oxford University Press, Oxford, 1992.
	\bibitem{ymb1} Y. M. Borse, M. M. Shikare and Pirouz Naiyer, A characterization of graphic matroids which yield
	cographic splitting matroids,{\it Ars Combin. }  118 (2015), 357-366.
	
\end{thebibliography}

\end{document}